\newtheorem{thm}{Theorem}[section]
\newtheorem{theorem}[thm]{Theorem}
\newtheorem{prop}[thm]{Proposition}
\newtheorem{lemma}[thm]{Lemma}
\newtheorem{cor}[thm]{Corollary}    
\newtheorem{example}[thm]{Example}
\def\mathd{\mathrm{d}}
\title{Ratios of Entire functions and generalized Stieltjes functions}
\author{Dimitris Askitis and Henrik L.\ Pedersen}
\begin{document}

\maketitle

\begin{abstract}
%
%
%
%
%
%

Monotonicity properties of the ratio 
$$
\log \frac{f(x+a_1)\cdots f(x+a_n)}{f(x+b_1)\cdots f(x+b_n)},
$$
where $f$ is an entire function are investigated. Earlier results for Euler's gamma function and other entire functions of genus 1 are generalised to entire functions of genus $p$ with negative zeros. Derivatives of order comparable to $p$ of the expression above are related to generalised Stieltjes functions of order $p+1$. Our results are applied to the Barnes multiple gamma functions. We also show how recent results on the behaviour of Euler's gamma function on vertical lines can be sharpened and generalised to functions of higher genus. Finally a connection to the so-called Prouhet-Tarry-Escott problem is described.
\end{abstract}
\noindent {\em \small 2020 Mathematics Subject Classification: Primary: 26A48,  Secondary: 30E20, 44A10} 

\noindent {\em \small Keywords: Entire function, Laplace transform, Generalized Stieltjes function, Euler's Gamma function, Barnes G-function}

\section{Introduction and results}

We revisit monotonicity properties of ratios of gamma functions and more generally entire functions of finite genus with translated variables. In 1986, Ismail and Bustoz showed that the function
\begin{equation}
\label{eq:bi}
x\mapsto \frac{\Gamma(x)\Gamma(x+a+b)}{\Gamma(x+a)\Gamma(x+b)}, \quad x>0
\end{equation}
is logarithmically completely monotonic for any $a,b>0$ (see \cite{bustoz-ismail}). Later results involve ratios of the form
$$
x\mapsto \frac{\Gamma(x+a_1)\cdots \Gamma(x+a_n)}{\Gamma(x+b_1)\cdots\Gamma(x+b_n)}, \quad x>0
$$
as well as situations where the number of factors in the numerator and denominator differ, and also with weighted variables (e.g.\ $x+a_j$ replaced by $A_jx+a_j$ and similarly for $x+b_j$). See e.g.~\cite{ismail1}, \cite{grinshpan-ismail1} and \cite{kp1}.

Recently the logarithm of the function in \eqref{eq:bi} was shown to be a so-called generalised Stieltjes function of order $2$, and hence also logarithmically completely 
monotonic (see \cite{bkp}). The purpose of this paper is to obtain similar results for sequences $(a_1,\ldots,a_n)$ and $(b_1,\ldots,b_n)$ in the framework of entire functions of 
finite genus having only real and non-positive zeros. 

A completely monotonic function $\varphi$ is a $C^{\infty}$-function defined on $(0,\infty)$ such that $(-1)^n\varphi^{(n)}(x)>0$ for all $n=0,1,\ldots$ and all $x>0$. These functions are characterised via Bernstein's theorem as the Laplace transforms of positive measures. A logarithmically completely monotonic function is a positive function $\varphi$ for which $-(\log \varphi)'$ is completely monotonic. These functions are also completely monotonic and can be characterised as those completely monotonic functions for which any $n$th root is also completely monotonic. For background on these functions see e.g.\ \cite{schilling} and \cite{steutel-van-harn}. A function $f$ defined on $(0,\infty)$ is called a generalised Stieltjes function of order $\lambda$ if it admits the representation 
$$
f(x)=\int_0^{\infty}\frac{\mathd \mu(t)}{(t+x)^{\lambda}}+c, \quad x>0,
$$
for some positive measure $\mu$ making the integral converge and for some $c\geq 0$. It is known, and easy to verify, that a function is a generalised Stieltjes function of order $\lambda$ if and only if it is the Laplace transform of $t^{\lambda-1}\varphi(t)$, where $\varphi$ is completely monotonic.

Denote by
$\mathcal E_p$ the class of real entire functions of genus $p$ with only real and non-positive zeros, and with $f(x)>0$ for $x>0$. For a function $f\in \mathcal E_p$ we arrange its zeros $\{-\lambda_k\}$ such that 
$0\leq\lambda_1\leq \lambda_2\leq \cdots$, and notice $\sum_{k=r+1}^{\infty}\lambda_k^{-p-1}<\infty$, where $r$ is the order of its possible zero at the origin. The function admits the Hadamard representation
$$
f(z)=e^{q(z)}P(z),
$$
where $q$ is a (real) polynomial of degree at most $p$, and
$$
P(z)=z^r\prod_{k=r+1}^{\infty}\left(1+z/\lambda_k\right) e^{-z/\lambda_k+\cdots+(-1)^pz^p/\lambda_k^p}.
$$
The Laplace representation of the derivative $(-1)^p(\log f)^{(p+1)}$  can be expressed as
\begin{equation}\label{eq:hlp-lemma}
(-1)^p(\log f)^{(p+1)}(z)=\int_{0}^\infty e^{-sz}s^ph(s)\mathd s,\quad \Re z>0, 
\end{equation}
where 
\begin{equation}
h(s)=\sum_{k=1}^\infty e^{-\lambda_ks}.\label{eq:def-h} 
\end{equation}
(See e.g.\ \cite[Proposition 2.1]{hlp}.)

We shall study ratios of products of the form
\begin{align}
\label{eq:defW}
W_f(x)=\frac{\prod_{k=1}^{n}f(x+a_k)}{\prod_{k=1}^{n}f(x+b_k)},
\end{align}
for $f\in \mathcal E_p$. The numbers $a_1,\ldots,a_n$ and $b_1,\ldots,b_n$ are non-negative and (without loss of generality) we assume $a_1\leq\cdots \leq a_n$ and $b_1\leq \cdots\leq b_n$. 

We shall define two functions in terms of these two sequences, namely
\begin{align}
g(s)&=\frac{1}{s^2}\sum_{k=1}^n \left(e^{-sa_k}-e^{-sb_k}\right),\label{eq:def-g}\\
\rho(t) &= \sum_{k=1}^n (t-a_k)\mathbbm{1}_{[a_k,\infty)}(t)-\sum_{k=1}^n (t-b_k)\mathbbm{1}_{[b_k,\infty)}(t). \label{eq:def-rho}
\end{align}
Here, $\mathbbm{1}_{S}$ denotes the indicator function of $S$. It turns out that monotonicity properties of $W_f$ are related with properties of the functions $g$ and $\rho$. Furthermore $g$ is completely monotonic if and only if $\rho$ is non-negative, and this holds if and only if 
\begin{align}
\label{eq:weak}
\sum_{k=1}^ma_k\leq\sum_{k=1}^{m}b_k, \quad \text{for all}\ m\in\{1,\ldots,n\}.
\end{align}
See Lemma \ref{lemma_wsmcm}.
We say that $b$ is a \textit{weak supermajorisation} of $a$ and denote it by $b\prec_wa$ if \eqref{eq:weak} holds.

Our main results deal with generalised Stieltjes functions and weak supermajorisation, and they are proved in Section \ref{sec:proofs}. 
There we also describe how one may in some situations obtain a generalised Stieltjes function without assuming weak supermajorisation. 
We consider the behaviour on vertical lines in Section \ref{sec:vertical} and show in particular how monotonicity results for Euler's gamma function on vertical lines in the complex plane can be obtained as corollaries of Theorems \ref{thm:main1} and \ref{thm:main2}. In Section \ref{sec:escott} we describe a connection with the so-called Prouhet-Tarry-Escott Problem.

To ease notation we denote 
by $\partial_z$ the usual derivative with respect to $z$.
\begin{theorem}
\label{thm:main1}
The function $(-1)^{p-1}\partial_z^p\log W_f(z)$ has the representation
\begin{align}
	(-1)^{p-1}\partial_z^p\log W_f(z)&=\int_0^\infty e^{-sz}s^{p+1}h(s)g(s)\mathd s\\
	&= \Gamma(p+2)\int_0^\infty \frac{\phi(t)\mathd t}{(z+t)^{p+2}},
\end{align}
	where $h$ is defined in \eqref{eq:def-h}, $g$ in \eqref{eq:def-g} and
	\begin{equation}
	 \label{eq:def-phi}
	 \phi(t)=\sum_{m=1}^\infty \rho(t-\lambda_m),
	\end{equation}
$\rho$ being defined in \eqref{eq:def-rho}.

Moreover, if $b\prec_wa$, then $(-1)^{p-1}\partial_z^p\log W_f(z)$ is a generalised Stieltjes function of order $p+2$.
\end{theorem}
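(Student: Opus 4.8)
The plan is to reduce everything to the Laplace representation \eqref{eq:hlp-lemma} of $(\log f)^{(p+1)}$ and then to invoke the characterisation of generalised Stieltjes functions recalled in the introduction. First I would write
\[
\partial_z^p\log W_f(z)=\sum_{k=1}^n\bigl[(\log f)^{(p)}(z+a_k)-(\log f)^{(p)}(z+b_k)\bigr],
\]
and rewrite each difference as $\int_{b_k}^{a_k}(\log f)^{(p+1)}(z+u)\,\mathd u$ by the fundamental theorem of calculus. Substituting $(\log f)^{(p+1)}(z+u)=(-1)^p\int_0^\infty e^{-s(z+u)}s^ph(s)\,\mathd s$ from \eqref{eq:hlp-lemma} and interchanging the (bounded) $u$-integral with the $s$-integral produces the inner factor $\int_{b_k}^{a_k}e^{-su}\,\mathd u=\tfrac1s(e^{-sb_k}-e^{-sa_k})$. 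Summing over $k$ and recognising $\sum_k(e^{-sa_k}-e^{-sb_k})=s^2g(s)$ from \eqref{eq:def-g} yields $\partial_z^p\log W_f(z)=(-1)^{p+1}\int_0^\infty e^{-sz}s^{p+1}h(s)g(s)\,\mathd s$; multiplying by $(-1)^{p-1}$ and using $(-1)^{2p}=1$ gives the first asserted equality.

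For the second equality I would show that $h(s)g(s)$ is itself a Laplace transform, namely $\int_0^\infty\phi(t)e^{-st}\,\mathd t=h(s)g(s)$, and then convert the resulting iterated transform into the Stieltjes kernel. Two elementary computations do the work: the Laplace transform of $(u-a)\mathbbm 1_{[a,\infty)}(u)$ is $e^{-as}/s^2$, so by \eqref{eq:def-rho} the Laplace transform of $\rho$ is precisely $g$; and since $\phi=\sum_m\rho(\cdot-\lambda_m)$ is a superposition of shifts by the non-negative numbers $\lambda_m$, while $\rho$ vanishes on $(-\infty,0)$ because the $a_k,b_k$ are non-negative, its Laplace transform acquires the factor $\sum_m e^{-\lambda_m s}=h(s)$. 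Inserting $h(s)g(s)=\int_0^\infty\phi(t)e^{-st}\,\mathd t$ into the first integral, using $(z+t)^{-(p+2)}=\Gamma(p+2)^{-1}\int_0^\infty e^{-(z+t)s}s^{p+1}\,\mathd s$, and interchanging the order of integration produces the representation $\Gamma(p+2)\int_0^\infty\phi(t)(z+t)^{-(p+2)}\,\mathd t$.

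For the generalised Stieltjes conclusion under $b\prec_wa$ I would apply the characterisation that a function is a generalised Stieltjes function of order $\lambda$ exactly when it is the Laplace transform of $t^{\lambda-1}\varphi(t)$ with $\varphi$ completely monotonic. Here $\lambda=p+2$ and, by the first equality, $(-1)^{p-1}\partial_z^p\log W_f$ is the Laplace transform of $s^{(p+2)-1}(hg)(s)$. Now $h(s)=\sum_k e^{-\lambda_ks}$ is completely monotonic as a sum of decaying exponentials, and by Lemma \ref{lemma_wsmcm} the hypothesis $b\prec_wa$ is equivalent to $g$ being completely monotonic; as a product of completely monotonic functions is completely monotonic, $hg$ is completely monotonic, and the characterisation gives the claim. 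Equivalently, $b\prec_wa$ forces $\rho\ge0$, hence $\phi=\sum_m\rho(\cdot-\lambda_m)\ge0$, so $\Gamma(p+2)\phi(t)\,\mathd t$ is a positive measure in the second representation.

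The step I expect to demand the most care is the justification of the interchanges of integration. Near $s=0$ the factor $h(s)$ is unbounded whereas $g(s)$ decays, so one must check that $s^{p+1}h(s)g(s)$ stays integrable against $e^{-sz}$, and for the $t$-integral one must control the growth of $\phi$ as $t\to\infty$ against the kernel $(z+t)^{-(p+2)}$; both ultimately rest on the genus-$p$ summability $\sum_k\lambda_k^{-p-1}<\infty$. When $b\prec_wa$ the non-negativity of $\phi$ lets one invoke Tonelli's theorem and avoid any conditional-convergence subtleties, and the identity of the two representations in the general case then follows by analyticity in $z$ on $\Re z>0$.
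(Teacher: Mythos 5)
Your proof is correct and follows essentially the same route as the paper: your fundamental-theorem-of-calculus step combined with \eqref{eq:hlp-lemma} is exactly the content of the paper's Lemma \ref{lemma:int-hlp}, your computation that the Laplace transform of $\rho$ equals $g$ is the paper's Lemma \ref{lemma_wsmcm}, and the shift/convolution identity $h(s)g(s)=\int_0^\infty e^{-st}\phi(t)\,\mathd t$ followed by Fubini and the non-negativity of $\phi$ under $b\prec_w a$ is precisely the published argument (your alternative argument via complete monotonicity of the product $hg$ is a valid equivalent). One small slip in your technical commentary: near $s=0$ the function $g$ need not decay --- it can grow like $1/s$ when $\sum_k a_k\neq\sum_k b_k$ --- but integrability of $e^{-sz}s^{p+1}h(s)g(s)$ still holds because $s^2g(s)=O(s)$ there and $s^{p}h(s)$ is integrable near $0$ by the genus-$p$ condition.
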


\begin{theorem}
\label{thm:main2} Suppose that $p\geq 1$ and $\sum_{k=1}^na_k=\sum_{k=1}^nb_k$.  Then,
\begin{align}
	(-1)^{p}\partial_z^{p-1}\log W_f(z)&=\int_0^\infty e^{-sz}s^{p}h(s)g(s)\mathd s\label{eq:main2-1}\\
	&= \Gamma(p+1)\int_0^\infty \frac{\phi(t)\mathd t}{(z+t)^{p+1}},\label{eq:main2-2}
\end{align}	where $h$, $g$ and $\phi$ are as above.

Moreover, if $b\prec_wa$, the function $(-1)^{p}\partial_z^{p-1}\log W_f(z)$ is a generalised Stieltjes function of order $p+1$.
\end{theorem}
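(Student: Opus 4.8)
\emph{Proof proposal.} The plan is to obtain Theorem~\ref{thm:main2} from Theorem~\ref{thm:main1} by integrating once in $z$, the only real issue being the constant of integration, which the balance hypothesis $\sum_{k=1}^n a_k=\sum_{k=1}^n b_k$ will force to be zero. Concretely, I would start from $\partial_z^p\log W_f(z)=(-1)^{p-1}\int_0^\infty e^{-sz}s^{p+1}h(s)g(s)\,\mathd s$ (Theorem~\ref{thm:main1}) and integrate both sides over $(z,\infty)$ along the positive axis. On the right, interchanging the two integrations turns $\int_z^\infty e^{-sw}\,\mathd w=e^{-sz}/s$ into an extra factor $1/s$, producing $(-1)^{p-1}\int_0^\infty e^{-sz}s^{p}h(s)g(s)\,\mathd s$; the interchange is legitimate because under the hypothesis $g$ is bounded on $(0,\infty)$ (its apparent $s^{-1}$ singularity at the origin cancels precisely when $\sum_k(a_k-b_k)=0$), so that the integral $\int_0^\infty e^{-sz}s^ph(s)|g(s)|\,\mathd s$ is finite by \eqref{eq:hlp-lemma}. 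On the left it yields $C-\partial_z^{p-1}\log W_f(z)$, where $C=\lim_{z\to\infty}\partial_z^{p-1}\log W_f(z)$ exists because $\partial_z^p\log W_f$ is integrable at $+\infty$. Rearranging gives $(-1)^p\partial_z^{p-1}\log W_f(z)=(-1)^pC+\int_0^\infty e^{-sz}s^ph(s)g(s)\,\mathd s$, so everything comes down to proving $C=0$.

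Showing $C=0$ is the crux, and it is exactly where $p\ge1$ and the balance hypothesis are spent. Since $p\ge1$ I can write $\partial_z^{p-1}\log W_f(z)=\sum_{k=1}^n\int_{b_k}^{a_k}(\log f)^{(p)}(z+u)\,\mathd u$. The subtlety is that $(\log f)^{(p)}$ itself need not have a finite limit at infinity (already for $f=1/\Gamma$ and $p=1$ it behaves like $-\log z$), so I would not pass to the limit inside each factor. Instead I would write $(\log f)^{(p)}(z+u)=(\log f)^{(p)}(z)+\int_0^u(\log f)^{(p+1)}(z+v)\,\mathd v$. The contribution of the first term to the sum is $(\log f)^{(p)}(z)\sum_{k=1}^n(a_k-b_k)$, which vanishes identically under the hypothesis no matter how $(\log f)^{(p)}(z)$ behaves; the contribution of the second term tends to $0$ because, by \eqref{eq:hlp-lemma}, $(\log f)^{(p+1)}(w)=(-1)^p\int_0^\infty e^{-sw}s^ph(s)\,\mathd s$ tends to $0$ as $w\to\infty$ (monotonically in absolute value), uniformly over the bounded shifts that occur. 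Hence $C=0$ and \eqref{eq:main2-1} follows. I expect this to be the main obstacle: a bare integration of the Laplace formula leaves an undetermined additive constant, and pinning it to zero requires the observation that the divergent part of $(\log f)^{(p)}$ enters only through the annihilated sum $\sum_k(a_k-b_k)$.

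For \eqref{eq:main2-2} I would convert the Laplace form into the Stieltjes form through $(z+t)^{-(p+1)}=\Gamma(p+1)^{-1}\int_0^\infty e^{-s(z+t)}s^p\,\mathd s$. The link is $\int_0^\infty e^{-st}\rho(t)\,\mathd t=g(s)$, which follows at once from \eqref{eq:def-rho} and \eqref{eq:def-g} using $\int_a^\infty e^{-st}(t-a)\,\mathd t=s^{-2}e^{-sa}$; translating $\rho$ by $\lambda_m$ and summing over $m$ then gives $\int_0^\infty e^{-st}\phi(t)\,\mathd t=h(s)g(s)$ with $\phi$ as in \eqref{eq:def-phi}. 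Substituting into the above identity and applying Fubini produces \eqref{eq:main2-2}. It is worth noting that the balance hypothesis is again what makes this work: it renders $\rho$ compactly supported (since $\rho(t)\to\sum_k(b_k-a_k)=0$), so that $\phi$ is controlled and the Stieltjes integral converges.

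Finally, assuming $b\prec_wa$, Lemma~\ref{lemma_wsmcm} gives that $g$ is completely monotonic, and $h(s)=\sum_k e^{-\lambda_ks}$ is completely monotonic as a sum of such functions; hence so is the product $hg$. Writing $(-1)^p\partial_z^{p-1}\log W_f(z)=\int_0^\infty e^{-sz}s^{(p+1)-1}(hg)(s)\,\mathd s$ exhibits it as the Laplace transform of $s^{p}$ times a completely monotonic function, which by the characterisation recalled in the introduction is precisely a generalised Stieltjes function of order $p+1$.
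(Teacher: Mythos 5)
Your proposal is correct, and it reaches the theorem by a genuinely different route at the crucial step. Both you and the paper reduce everything to integrating the representation of Theorem~\ref{thm:main1} once (the paper phrases this as: the function $\Phi(x)=\int_0^\infty e^{-sx}s^ph(s)g(s)\,\mathd s$ is well defined because $g$ stays bounded at $s=0$ under the balance hypothesis, and $\Phi'=(-1)^p\partial_x^p\log W_f$), so in both cases the whole problem is to show that the resulting constant of integration, i.e.\ $\lim_{x\to\infty}(-1)^p\partial_x^{p-1}\log W_f(x)$, vanishes. Here the paths diverge: the paper invokes its Lemma~\ref{lemma:limit}, a fairly heavy computation that expands $\partial_x^{p-\ell}\log W_f$ through the Hadamard factorization and binomial series, establishes a bound uniform in $x$ and $k$, and then applies dominated convergence together with $\sum_m\lambda_m^{-p-1}<\infty$. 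You instead write $\partial_z^{p-1}\log W_f(z)=\sum_k\int_{b_k}^{a_k}(\log f)^{(p)}(z+u)\,\mathd u$ and split $(\log f)^{(p)}(z+u)=(\log f)^{(p)}(z)+\int_0^u(\log f)^{(p+1)}(z+v)\,\mathd v$; the possibly divergent term $(\log f)^{(p)}(z)$ enters only multiplied by $\sum_k(a_k-b_k)=0$, and the remainder is bounded by a constant times $\bigl|(\log f)^{(p+1)}(z)\bigr|$, which decreases to $0$ by \eqref{eq:hlp-lemma}. This is more elementary and self-contained — it bypasses Lemma~\ref{lemma:limit} entirely — though it handles only the first-order cancellation, whereas the paper's lemma is built to serve Theorem~\ref{thm:main3} as well (higher-order cancellations $\ell\le p$); your trick would need to be iterated to cover that. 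Two further, minor differences: for the conversion to the Stieltjes form you correctly observe that the balance hypothesis makes $\rho$ compactly supported so that $\int_0^\infty|\phi(t)|(z+t)^{-p-1}\,\mathd t<\infty$ — a justification of Fubini the paper leaves implicit — and for the final assertion you conclude from complete monotonicity of $hg$ via the Laplace-transform characterisation of generalised Stieltjes functions, while the paper concludes from non-negativity of the density $\phi$; these are equivalent.
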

When $p=1$ and $f$ is the reciprocal of Euler's gamma function  we obtain a generalization of \cite[Proposition 4.1]{bkp}.
We remark that the representation in \eqref{eq:main2-2} implies that $\partial_z^{p-1}\log W_f(z)$ tends to $0$ as $z$ tends to infinity along any ray different from the negative real line.
In the case where $f$ is the reciprocal of Euler's gamma function, $\log W_f(x)$ can, using Stirling's series, be shown to tend to zero as $x$ tends to infinity when 
 the non-negative sequences $a$ and $b$ satisfy
$$
\sum_{j=1}^na_j=\sum_{j=1}^nb_j.
$$
For a general function $f$ in $\mathcal E_1$, $\log W_f(x)$ can be shown to tend to zero by using the representation of the so-called Pick-functions, see e.g.\ \cite{bkp}. 
In case of higher genus there are asymptotic formulae for entire functions with non-positive zeros having a density (see \cite{levin}), but Stirling type series for general functions in $\mathcal E_p$ are not known to us. In Lemma 2.4 we show how cancellation between the sequences $a$ and $b$ allows us to obtain the limit behaviour of $\log W_f$ and its derivatives for general $f\in \mathcal E_p$.

If we assume that $b\prec_wa$, and both of the relations 
$$\sum_{k=1}^na_k=\sum_{k=1}^nb_k,\quad \sum_{k=1}^na_k^2=\sum_{k=1}^nb_k^2
$$
hold then it turns out that $a=b$, see Proposition \ref{lemma:impossible}. Hence, Theorem \ref{thm:main2} does not give any infomation with these three assumptions. However, 
a primitive (up to a sign) 
of the function in Theorem \ref{thm:main2} may also be a generalised Stieltjes function of order $p+1$ under the assumption of non-negativity of a primitive of $\rho$. This is the contents of the next result, extending Theorem \ref{thm:main2}.


\begin{theorem}\label{thm:main3}Suppose that $p\geq 1$ and $1\leq \ell\leq p$. Let $a$ and $b$ satisfy
	$$
	\sum_{k=1}^na_k^j=\sum_{k=1}^nb_k^j
	$$
	for all $j$ with  $0\leq j\leq \ell$. Then,
	\begin{align}
	(-1)^{p-\ell+1}\partial_z^{p-\ell}\log W_f(z)&=\int_0^\infty e^{-sz}s^{p}h(s)g_\ell(s)\mathd s\\
	&= \Gamma(p+2)\int_0^\infty \frac{\phi_\ell(t)\mathd t}{(z+t)^{p+1}},
	\end{align}
	where
	\begin{align}
	g_\ell(s)&=\dfrac{\sum_{k=1}^n e^{-sa_k}-\sum_{k=1}^ne^{-sb_k}}{s^{1+\ell}}\quad \text{and}\\
	\phi_\ell(t)&=\sum_{m=1}^\infty \rho_\ell(t-\lambda_m)
	\end{align}
	and where
	\begin{align}
	\rho_\ell(t) = \sum_{k=1}^n (t-a_k)^{\ell}\mathbbm{1}_{[a_k,\infty)}(t)-\sum_{k=1}^n (t-b_k)^\ell\mathbbm{1}_{[b_k,\infty)}(t)\,.
	\end{align}
	Thus, if $\rho_\ell$ is non-negative,
	$(-1)^{p-\ell+1}\partial_z^{p-\ell}\log W_f(z)$
	is a generalised Stieltjes function of order $p+1$.
\end{theorem}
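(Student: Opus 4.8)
The plan is to mirror the strategy that presumably underlies Theorems~\ref{thm:main1} and~\ref{thm:main2}, adapting it to the higher-order cancellation hypothesis. The key observation is that the conditions $\sum_k a_k^j=\sum_k b_k^j$ for $0\le j\le\ell$ govern the behaviour of $g_\ell$ near $s=0$: expanding $\sum_k e^{-sa_k}-\sum_k e^{-sb_k}=\sum_{j\ge0}\frac{(-s)^j}{j!}\bigl(\sum_k a_k^j-\sum_k b_k^j\bigr)$, the hypotheses kill the terms of order $s^0,\ldots,s^\ell$, so the numerator is $O(s^{\ell+1})$ and $g_\ell(s)=\bigl(\sum_k e^{-sa_k}-\sum_k e^{-sb_k}\bigr)/s^{1+\ell}$ is bounded (indeed continuous) at the origin. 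This integrability at $s=0$ is exactly what is needed for the Laplace integral $\int_0^\infty e^{-sz}s^p h(s)g_\ell(s)\,\mathd s$ to converge, recalling that $h(s)=\sum_k e^{-\lambda_k s}=O(1/s)$ near $0$.

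First I would establish the Laplace representation. Starting from \eqref{eq:hlp-lemma}, namely $(-1)^p(\log f)^{(p+1)}(z)=\int_0^\infty e^{-sz}s^p h(s)\,\mathd s$, I form $\partial_z^{p+1}\log W_f(z)=\sum_{k=1}^n\bigl((\log f)^{(p+1)}(z+a_k)-(\log f)^{(p+1)}(z+b_k)\bigr)$. Using the shift rule $e^{-s(z+a_k)}=e^{-sa_k}e^{-sz}$ and factoring, this gives $(-1)^p\partial_z^{p+1}\log W_f(z)=\int_0^\infty e^{-sz}s^p h(s)\bigl(\sum_k e^{-sa_k}-\sum_k e^{-sb_k}\bigr)\,\mathd s=\int_0^\infty e^{-sz}s^p h(s)\,s^{1+\ell}g_\ell(s)\,\mathd s$. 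To pass from the $(p+1)$th derivative to the $(p-\ell)$th derivative I would integrate $\ell+1$ times in $z$; each integration against the Laplace kernel divides the integrand by $s$, converting the factor $s^{1+\ell}$ into $1$ and leaving $\int_0^\infty e^{-sz}s^p h(s)g_\ell(s)\,\mathd s$, with the sign $(-1)^{p-\ell+1}$ and the vanishing of the constants of integration justified by the decay at infinity noted after Theorem~\ref{thm:main2} (and Lemma~2.4). This yields the first displayed equality.

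Next I would convert the Laplace integral into the generalised Stieltjes form. Inserting $h(s)=\sum_{m=1}^\infty e^{-\lambda_m s}$ and writing $s^{1+\ell}g_\ell(s)=\sum_k e^{-sa_k}-\sum_k e^{-sb_k}$, I interpret the integral through the measure-theoretic identity encoded in $\rho_\ell$. Concretely, $\int_0^\infty e^{-st}(t-a_k)^\ell\mathbbm1_{[a_k,\infty)}(t)\,\mathd t=\Gamma(\ell+1)e^{-sa_k}/s^{\ell+1}$, so $\rho_\ell$ is (up to the factor $\Gamma(\ell+1)$) the inverse Laplace transform of $s^{-1-\ell}\bigl(\sum_k e^{-sa_k}-\sum_k e^{-sb_k}\bigr)=g_\ell(s)$; translating by $\lambda_m$ and summing produces $\phi_\ell$ as the inverse Laplace transform of $h(s)g_\ell(s)$ (with the appropriate Gamma constant). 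Substituting $h(s)g_\ell(s)=\int_0^\infty e^{-st}\phi_\ell(t)\,\mathd t/\Gamma(\ell+1)$ into $\int_0^\infty e^{-sz}s^p\bigl(h(s)g_\ell(s)\bigr)\mathd s$ and applying the standard identity $\int_0^\infty e^{-sz}s^p e^{-st}\,\mathd s=\Gamma(p+1)/(z+t)^{p+1}$ (after interchanging the order of integration) gives the second displayed equality with constant $\Gamma(p+2)$; I would verify that the stated constant matches by keeping careful track of the $\Gamma(\ell+1)$ and $\Gamma(p+1)$ factors.

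Finally, the generalised Stieltjes conclusion is immediate once the representation is in place: if $\rho_\ell\ge0$ pointwise, then each translate $\rho_\ell(\,\cdot-\lambda_m)$ is non-negative, hence $\phi_\ell\ge0$, so $\phi_\ell(t)\,\mathd t$ is a positive measure and $(-1)^{p-\ell+1}\partial_z^{p-\ell}\log W_f(z)=\Gamma(p+2)\int_0^\infty\phi_\ell(t)\,\mathd t/(z+t)^{p+1}$ exhibits the function as a generalised Stieltjes function of order $p+1$ (with $c=0$). The main obstacle I anticipate is the rigorous justification of the interchanges of summation and integration and, above all, the convergence bookkeeping near $s=0$: one must confirm that the cancellation hypotheses make $g_\ell$ behave like $O(1)$ (not merely formally) so that all the Laplace integrals converge absolutely and the Fubini interchanges are legitimate, and that the $\ell+1$ integrations in $z$ genuinely produce no surviving polynomial terms, which is where Lemma~2.4 on limiting behaviour does the essential work.
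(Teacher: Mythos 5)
Your proposal is correct in substance and follows essentially the same route as the paper. The paper proves Theorem \ref{thm:main3} by induction on $\ell$, with Theorem \ref{thm:main2} as the base case, performing one integration in $x$ per induction step and invoking Lemma \ref{lemma:limit} to kill the single constant of integration each time; you instead perform all $\ell+1$ integrations in one sweep starting from the $(p+1)$-st derivative and then kill the resulting degree-$\ell$ polynomial of integration constants by decay at infinity, again via Lemma \ref{lemma:limit}. These are the same mechanism (a polynomial tending to zero at $+\infty$ is identically zero), and the remaining ingredients --- boundedness of $g_\ell$ near $0$ from the power-sum cancellations, Lemma \ref{lemma_wsmcm2}, the convolution identity $h(s)g_\ell(s)=L(\phi_\ell)(s)/\ell!$, and the Fubini interchange --- are exactly those used in the paper.

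Two points in your write-up need repair, though neither is structural. First, your justification of convergence via ``$h(s)=O(1/s)$ near $0$'' is false for general genus: for $p\geq 2$ the distribution of the zeros can make $h(s)$ grow nearly like $s^{-p-1}$ (e.g.\ $\lambda_k=(k\log^2k)^{1/(p+1)}$ gives genus $p$ but $h(s)\asymp s^{-p-1}\log^{-2}(1/s)$). What is true, and all that is needed, is that $\int_0^1 s^p h(s)\,\mathd s<\infty$, which follows by termwise integration from $\sum_k\lambda_k^{-p-1}<\infty$; combined with the boundedness of $g_\ell$ near $0$ this gives absolute convergence of every Laplace integral $\int_0^\infty e^{-sz}s^{p+j}h(s)g_\ell(s)\,\mathd s$, $j\geq 0$, occurring in your iterated integration. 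Second, the constant: carrying through $hg_\ell=L(\phi_\ell)/\Gamma(\ell+1)$ and $\int_0^\infty e^{-s(z+t)}s^p\,\mathd s=\Gamma(p+1)/(z+t)^{p+1}$ yields $\Gamma(p+1)/\Gamma(\ell+1)$, not $\Gamma(p+2)$ as you claim your computation delivers. You should not try to force $\Gamma(p+2)$: the stated constant appears to be a typo in the paper itself (the paper's own proof ends with $\Gamma(p+1)$, and for $\ell=1$ consistency with Theorem \ref{thm:main2} forces $\Gamma(p+1)$), and since a positive multiplicative constant is irrelevant to being a generalised Stieltjes function of order $p+1$, the final conclusion is unaffected.
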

Proposition \ref{prop:sufficient} gives a sufficient condition for non-negativity of the function $\rho_{\ell}$ in Theorem \ref{thm:main3}
depending on the number of points in the sequences $a$ and $b$.

Letting  $a=(0,4,5)$ and $b=(1,2,6)$, the corresponding function $\rho_2$ is in fact non-negative (see Figure \ref{fig:rho_2b}), and so Theorem \ref{thm:main3} can be applied. We notice that in this case  $\rho_1=\rho$ takes both positive and negative values, so Theorem \ref{thm:main2} cannot be applied.
\begin{table}[H]
	\begin{center}
		\begin{tabular}{cc}
			\includegraphics[width=0.45\textwidth, keepaspectratio]{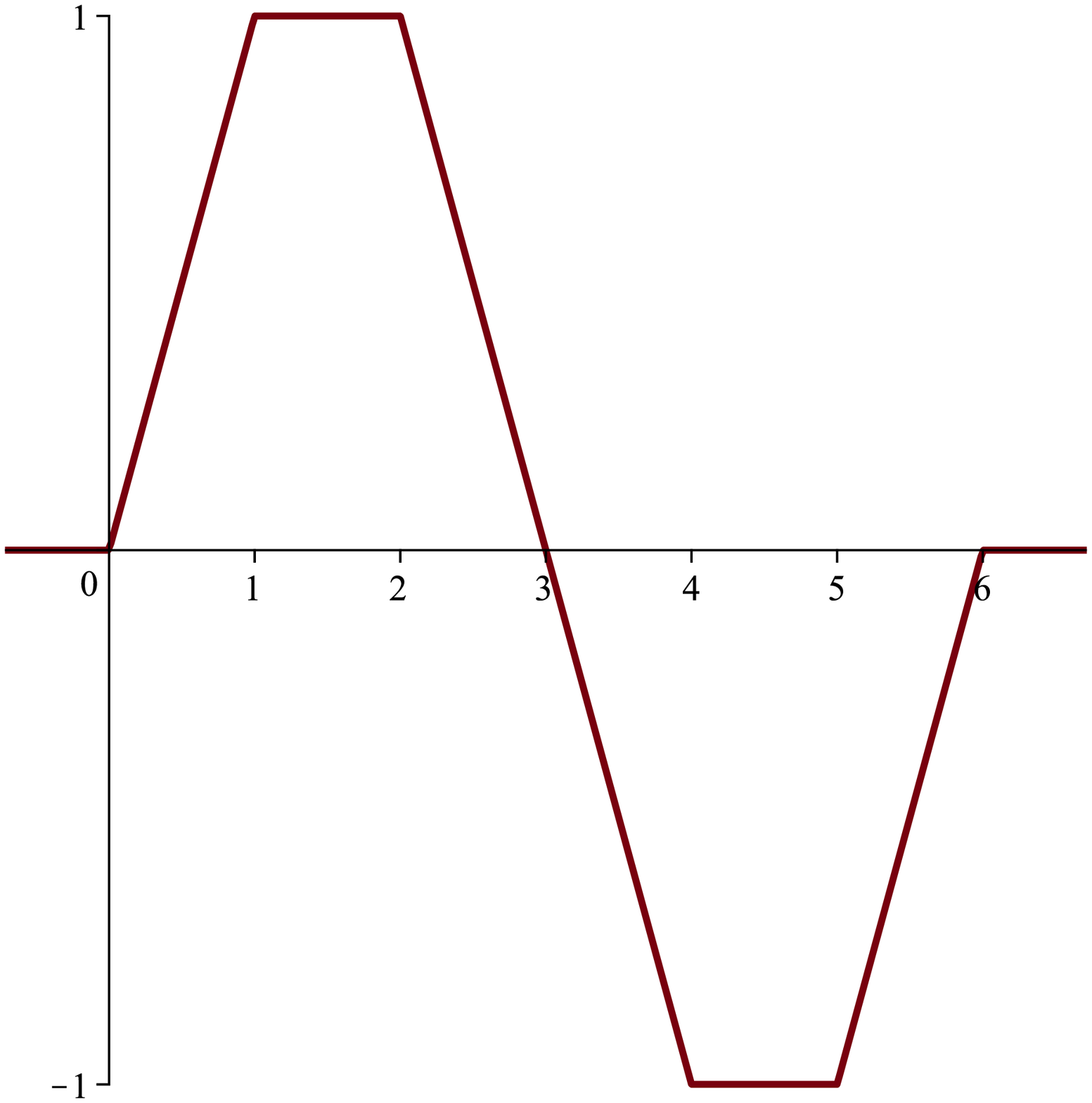}&
			\includegraphics[width=0.45\textwidth, keepaspectratio]{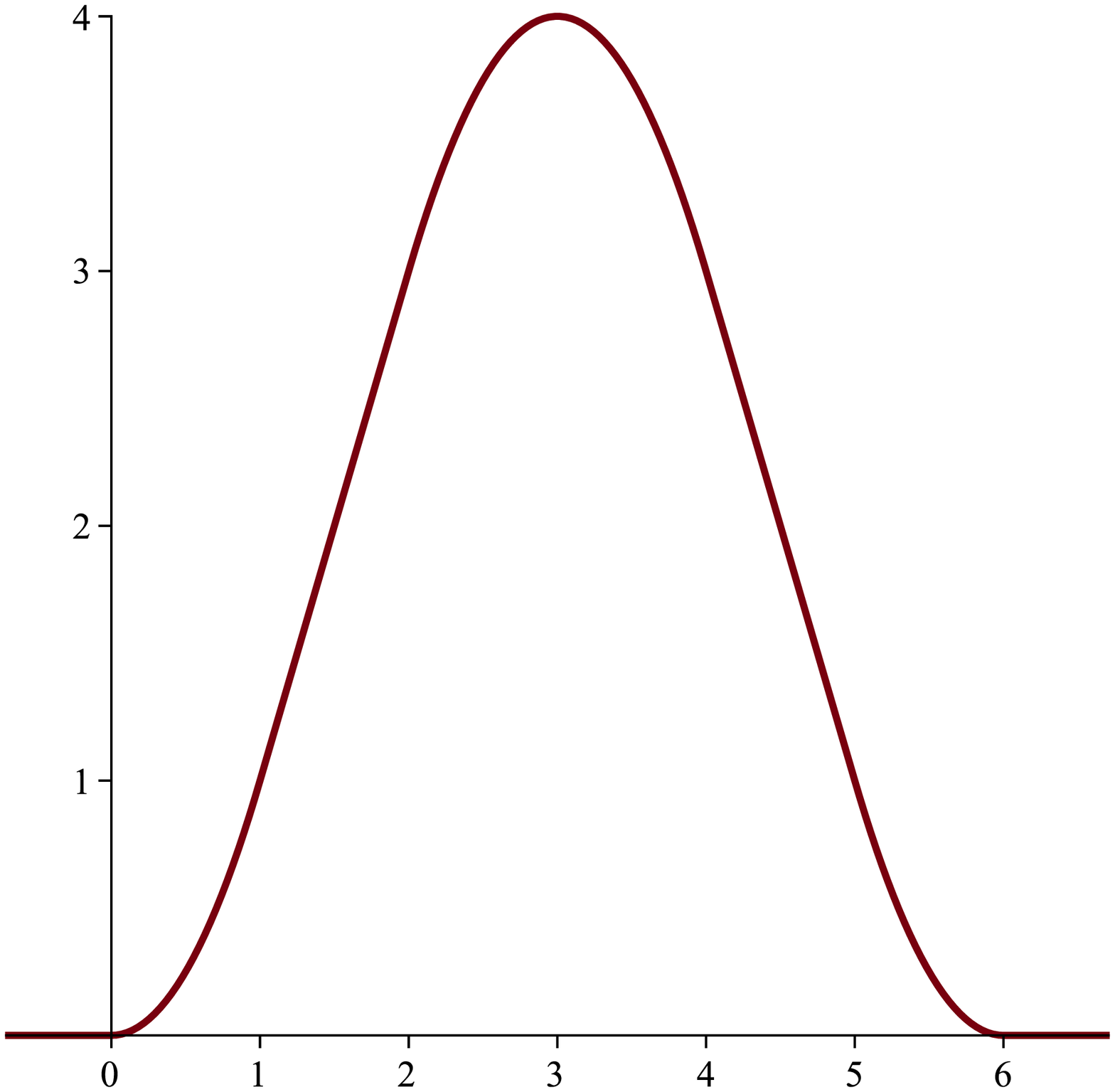}\\
			\scriptsize{$\rho_1$}&\scriptsize{$\rho_2$}
		\end{tabular}
		\captionof{figure}{$\rho_1$ and $\rho_2$, corresponding to $a=(0,4,5)$ and $b=(1,2,6)$.}
		\label{fig:rho_2b}
	\end{center}
\end{table}
Our results can be applied to the Barnes multiple gamma functions $\Gamma_N$ introduced in a series
of papers, see e.g.~\cite{Barnes}.  For the reader's convenience we briefly describe the construction of these functions (with parameters equal to $1$) following \cite{RUIJSENAARS}. Hurwitz' multiple zeta function $\zeta_N$ is given as
$$
\zeta_N(s,z)=\sum_{m_1,\ldots,m_N\geq 0}(z+m_1+\ldots+m_N)^{-s}.
$$ 
This function has as a function of $s$ a meromorphic extension to $\mathbb C$ and
the multiple gamma function is then defined as 
$$
\Gamma_N(z)=\exp \partial_s\zeta_N(s,z)_{s=0}.
$$
These functions satisfy the relations $\Gamma_{N+1}(z)=\Gamma_N(z)\Gamma_{N+1}(z+1)$ and $\Gamma_0(z)=1/z$.
The reciprocal $1/\Gamma_N(z)$ is an entire
function of genus $N$ having non-positive zeros located at $z=-k$, $k\in \{0,1,\ldots\}$, of multiplicity $\binom{k+N-1}{N-1}$, and $\Gamma_N(1)>0$. 

%
%

The reciprocal of Barnes' double gamma function $\Gamma_2$ is proportional to the so-called $G$-function defined by
$$
G(z+1)=(2\pi)^{z/2}e^{-((1+\gamma)z^2+z)/2}\prod_{k=1}^{\infty}\left(
  1+\frac{z}{k}\right) ^ke^{-z+z^2/2k}.
$$
(Here, $\gamma$ denotes Eulers constant.) It satisfies $G(1)=1$ and $G(z+1)=\Gamma(z)G(z)$. The zeros of $G(z+1)$ are at the negative integers and the multiplicity of the zero at $-k$ is $k$.

\begin{cor} 
\begin{align*}
	(-1)^{N-1}\partial_z^{N-1}\log W_{1/\Gamma_N}(z)&=\int_0^\infty e^{-sz}s^{N}h(s)g(s)\mathd s\\
	&= \Gamma(N+1)\int_0^\infty \frac{\phi(t)\mathd t}{(z+t)^{N+1}},
\end{align*}
where
$$
h(s)=\sum_{k=0}^{\infty}\begin{pmatrix}
                         k+N-1\\N-1
                        \end{pmatrix}
e^{-ks},
$$
and $g$ and $\phi$ are as in \eqref{eq:def-g} and \eqref{eq:def-phi}.
\end{cor}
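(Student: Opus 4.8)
The plan is to obtain the corollary as the special case $f=1/\Gamma_N$ of Theorem~\ref{thm:main2}, taken with genus parameter $p=N$; the whole argument then reduces to confirming that $1/\Gamma_N\in\mathcal E_N$ and to evaluating the function $h$ attached to it. First I would collect the three facts that place $1/\Gamma_N$ in $\mathcal E_N$. It is real and entire, being the reciprocal of the zero-free meromorphic function $\Gamma_N$. It is positive on $(0,\infty)$: there $\Gamma_N$ is finite, real and nonzero, hence of constant sign on the connected set $(0,\infty)$, and $\Gamma_N(1)>0$ forces $\Gamma_N>0$, so $1/\Gamma_N>0$. Finally it has genus exactly $N$ with only real non-positive zeros, these being the poles of $\Gamma_N$ at $z=-k$, $k\in\{0,1,\ldots\}$, the zero at $-k$ having multiplicity $\binom{k+N-1}{N-1}$. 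All three are exactly the properties asserted in the text from the $\zeta_N$-construction, so this step is a matter of recording them.

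Next I would compute $h$ from its defining series \eqref{eq:def-h}. Listing the zeros $\{-\lambda_j\}$ of $1/\Gamma_N$ with multiplicity amounts to repeating the integer $k$ exactly $\binom{k+N-1}{N-1}$ times, whence
\begin{equation*}
h(s)=\sum_{j}e^{-\lambda_j s}=\sum_{k=0}^{\infty}\binom{k+N-1}{N-1}e^{-ks}=\frac{1}{(1-e^{-s})^{N}},
\end{equation*}
which is precisely the $h$ displayed in the statement. Since $g$ and $\phi$ (given by \eqref{eq:def-g} and \eqref{eq:def-phi}) depend only on the sequences $a,b$ and on the $\lambda_m$, they are unaffected by the specialisation. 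Inserting $p=N$ and this $h$ into the representations \eqref{eq:main2-1}--\eqref{eq:main2-2} of Theorem~\ref{thm:main2} then yields the two claimed identities; here the standing hypotheses of that theorem, namely $N\geq 1$ and $\sum_{k=1}^n a_k=\sum_{k=1}^n b_k$, are understood to carry over, the latter being exactly what is needed for convergence of the Laplace integral at $s=0$, since $s^{N}h(s)\to 1$ there while $g(s)$ stays bounded only when the first moments agree.

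The only substantive work, and the main obstacle, is the verification that $1/\Gamma_N$ has genus exactly $N$ together with the stated zero multiplicities. For the multiplicities one reads the order of the pole of $\Gamma_N$ at $-k$ off the series for $\zeta_N(s,z)$, the point $z=-k$ arising from each solution of $k=m_1+\cdots+m_N$ with $m_j\geq 0$, of which there are $\binom{k+N-1}{N-1}$; alternatively one argues by induction on $N$ using $\Gamma_{N+1}(z)=\Gamma_N(z)\Gamma_{N+1}(z+1)$ and $\Gamma_0(z)=1/z$. For the genus one notes that the multiplicities grow like $\binom{k+N-1}{N-1}\sim k^{N-1}/(N-1)!$, so that $\sum_k\lambda_k^{-(N+1)}<\infty$ while $\sum_k\lambda_k^{-N}=\infty$, and that the Hadamard exponential factor carries a polynomial of degree at most $N$; together these pin the genus at $N$. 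With these structural facts in hand the corollary is immediate from Theorem~\ref{thm:main2}.
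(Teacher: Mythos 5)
Your route is the paper's own: the paper proves this corollary by the single remark that it follows from Theorem \ref{thm:main2}, and your supporting verifications are sound --- $1/\Gamma_N$ is real entire, positive on $(0,\infty)$, of genus $N$ with zeros at $-k$ of multiplicity $\binom{k+N-1}{N-1}$, hence lies in $\mathcal E_N$; the function \eqref{eq:def-h} attached to it is $h(s)=\sum_{k=0}^{\infty}\binom{k+N-1}{N-1}e^{-ks}=(1-e^{-s})^{-N}$; and the hypotheses $N\geq 1$ and $\sum_{k=1}^na_k=\sum_{k=1}^nb_k$ must indeed be carried over from the theorem.

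However, your final step conceals a genuine discrepancy. Substituting $p=N$ and $f=1/\Gamma_N$ into \eqref{eq:main2-1}--\eqref{eq:main2-2} produces
\begin{align*}
(-1)^{N}\partial_z^{N-1}\log W_{1/\Gamma_N}(z)&=\int_0^\infty e^{-sz}s^{N}h(s)g(s)\,\mathd s= \Gamma(N+1)\int_0^\infty \frac{\phi(t)\,\mathd t}{(z+t)^{N+1}},
\end{align*}
i.e.\ the prefactor $(-1)^{N}$, whereas the statement you are proving carries $(-1)^{N-1}$; these differ by a sign, so Theorem \ref{thm:main2} does not literally ``yield the two claimed identities'' as you assert. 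The printed sign is in fact a typo in the corollary, as the case $N=1$ shows: with $f=1/\Gamma$, $a=(0,a+b)$, $b=(a,b)$ one has $W_{1/\Gamma}(x)=\prod_k\Gamma(x+b_k)\big/\prod_k\Gamma(x+a_k)$, and the non-negative generalised Stieltjes function of order $2$ from \cite{bkp} recalled in the introduction is $\log\bigl(\Gamma(x)\Gamma(x+a+b)/(\Gamma(x+a)\Gamma(x+b))\bigr)=-\log W_{1/\Gamma}(x)$; since $b\prec_w a$ there gives $\phi\geq 0$, the right-hand side is non-negative and only the sign $(-1)^{N}$ is consistent. Writing blind, the correct move was to derive the identity with $(-1)^{N}$ and flag the mismatch with the stated sign (or equivalently restate the left side as $(-1)^{N-1}\partial_z^{N-1}\log W_{\Gamma_N}(z)$); claiming that the stated sign drops out of the theorem is the one step of your argument that is false.
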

The proof of this corollary follows from Theorem \ref{thm:main2}. 

\section{Proofs}
\label{sec:proofs}
In this section Theorems \ref{thm:main1} and \ref{thm:main2} are proved. The following lemma is an immediate consequence of the relation \eqref{eq:hlp-lemma}.
\begin{lemma}
\label{lemma:int-hlp}
	Let $f\in \mathcal E_p$ and assume $w_1,w_2\in\{\Re w>0\}$. Then,
	\[(-1)^p\partial_z^p\log f(w_2)-(-1)^p\partial_z^p\log f(w_1)=\int_0^\infty s^{p-1}h(s)(e^{-sw_1}-e^{-sw_2})\mathd s\]
	where $h$ is defined in \eqref{eq:def-h}. 
\end{lemma}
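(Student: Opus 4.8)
The plan is to integrate the Laplace representation \eqref{eq:hlp-lemma} for the $(p+1)$-st derivative once, and recognise the difference of $p$-th derivatives as the resulting primitive. Since $f\in\mathcal E_p$ has all its zeros on $(-\infty,0]$, the function $\log f$ extends to an analytic function on the open right half-plane $\{\Re z>0\}$ (which is simply connected and contains no zero of $f$), and hence so does $(\log f)^{(p+1)}$. First I would fix $w_1,w_2$ with positive real part and join them by the straight segment $\gamma(u)=(1-u)w_1+uw_2$, $u\in[0,1]$; by convexity of the half-plane the whole segment lies in $\{\Re z>0\}$, and $\delta:=\min(\Re w_1,\Re w_2)=\min_{u}\Re\gamma(u)>0$. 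The fundamental theorem of calculus for the analytic function $(\log f)^{(p)}$ then gives
\[
\partial_z^p\log f(w_2)-\partial_z^p\log f(w_1)=\int_\gamma (\log f)^{(p+1)}(z)\,\mathd z .
\]

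Next I would insert \eqref{eq:hlp-lemma}, writing $(\log f)^{(p+1)}(z)=(-1)^p\int_0^\infty e^{-sz}s^ph(s)\,\mathd s$, and interchange the order of integration. This is the only point requiring care: along $\gamma$ one has $|e^{-sz}|=e^{-s\Re z}\le e^{-s\delta}$, so the double integral is dominated by $\mathrm{length}(\gamma)\int_0^\infty s^ph(s)e^{-s\delta}\,\mathd s$, which is finite because it is exactly the right-hand side of \eqref{eq:hlp-lemma} evaluated at the real point $z=\delta$. Fubini's theorem therefore applies, and after swapping I would evaluate the elementary inner integral
\[
\int_\gamma e^{-sz}\,\mathd z=\frac1s\bigl(e^{-sw_1}-e^{-sw_2}\bigr),
\]
whose factor $1/s$ lowers the power of $s$ from $p$ to $p-1$. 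This yields $\partial_z^p\log f(w_2)-\partial_z^p\log f(w_1)=(-1)^p\int_0^\infty s^{p-1}h(s)(e^{-sw_1}-e^{-sw_2})\,\mathd s$.

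Finally I would multiply through by $(-1)^p$; since $(-1)^p\cdot(-1)^p=1$, the factor $(-1)^p$ attached to the derivatives matches the $(-1)^p$ produced by \eqref{eq:hlp-lemma} and cancels, giving exactly the claimed identity. The main (and essentially only) obstacle is justifying the interchange of the two integrations; everything else is a one-line primitive computation, which is why the statement is an immediate consequence of \eqref{eq:hlp-lemma}.
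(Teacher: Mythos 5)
Your proof is correct and is precisely the argument the paper has in mind when it calls the lemma ``an immediate consequence'' of \eqref{eq:hlp-lemma}: integrate the representation of $(-1)^p(\log f)^{(p+1)}$ along a path joining $w_1$ to $w_2$ in the right half-plane, swap the order of integration, and evaluate $\int_\gamma e^{-sz}\,\mathd z=(e^{-sw_1}-e^{-sw_2})/s$. Your Fubini justification via the bound $|e^{-sz}|\le e^{-s\delta}$ with $\delta=\min(\Re w_1,\Re w_2)$ is exactly the care the paper leaves implicit, and the signs work out as you state.
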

The next lemma is the key to our results.
\begin{lemma}\label{lemma_wsmcm}
	The function $g$ in \eqref{eq:def-g} is the Laplace transform of $\rho$ in \eqref{eq:def-rho}:
		\begin{align*}
	g(s)=\int_0^\infty e^{-st}\rho(t)\mathd t.
	\end{align*}
	Moreover, $g$ is completely monotonic if and only if $\rho$ is non-negative which is the case exactly when $(b_k)_{k=1}^n\prec_w(a_k)_{k=1}^n$.

\end{lemma}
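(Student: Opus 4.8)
The plan is to establish the three assertions of the lemma in turn: the Laplace-transform identity, the Bernstein-type equivalence, and finally the combinatorial characterisation, the last of which I expect to be the crux.

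First I would verify the identity $g(s)=\int_0^\infty e^{-st}\rho(t)\,\mathd t$ by transforming each summand of \eqref{eq:def-rho} separately. For a single breakpoint $c\ge 0$ the substitution $u=t-c$ gives
$$
\int_0^\infty e^{-st}(t-c)\mathbbm{1}_{[c,\infty)}(t)\,\mathd t=\int_0^\infty e^{-s(u+c)}u\,\mathd u=\frac{e^{-sc}}{s^2},\qquad s>0,
$$
and summing over the $a_k$ with a plus sign and the $b_k$ with a minus sign reproduces \eqref{eq:def-g}. Here $\rho$ is continuous, piecewise linear, vanishes near the origin, and is eventually constant with value $\sum_k b_k-\sum_k a_k$; hence it is bounded and the integral converges absolutely for every $s>0$, so the termwise computation is legitimate. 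With this identity the equivalence ``$g$ completely monotonic $\iff \rho\ge 0$'' follows from Bernstein's theorem: if $\rho\ge 0$ then $\rho(t)\,\mathd t$ is a positive measure with Laplace transform $g$, so $g$ is completely monotonic; conversely, if $g$ is completely monotonic its (unique) positive representing measure must, by injectivity of the Laplace transform, coincide with $\rho(t)\,\mathd t$, and positivity of this measure together with continuity of $\rho$ forces $\rho\ge 0$ on $(0,\infty)$.

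The substantive part is the equivalence of $\rho\ge 0$ with \eqref{eq:weak}, and the key observation is a closed form for $\rho$. Writing $F_a(t)=\#\{k:a_k\le t\}$, $F_b(t)=\#\{k:b_k\le t\}$ and abbreviating $S^a_m=\sum_{k=1}^m a_k$, $S^b_m=\sum_{k=1}^m b_k$, one checks that whenever $F_a(t)=p$ and $F_b(t)=q$,
$$
\rho(t)=(p-q)\,t-S^a_p+S^b_q .
$$
Since $\rho$ is piecewise linear with slope $F_a-F_b$, which increases at each $a_k$ and decreases at each $b_k$, its minima occur (generically) only at the points $a_k$. To prove \eqref{eq:weak}$\Rightarrow\rho\ge 0$ I would fix $t$, set $p=F_a(t)$, $q=F_b(t)$, and bound $(p-q)t$: if $p\ge q$ then $t\ge a_k$ for $q<k\le p$ gives $(p-q)t\ge S^a_p-S^a_q$, while if $p<q$ then $t<a_{p+1}\le a_k$ for $p<k\le q$ gives $(p-q)t\ge S^a_p-S^a_q$ as well; in either case $\rho(t)\ge S^b_q-S^a_q\ge 0$, the last inequality being \eqref{eq:weak} at $m=q$.

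The hard part will be the converse, $\rho\ge 0\Rightarrow\eqref{eq:weak}$. I would argue by contraposition: assuming \eqref{eq:weak} fails, let $m_0$ be the smallest index with $S^a_{m_0}>S^b_{m_0}$. Subtracting the (valid) inequality $S^a_{m_0-1}\le S^b_{m_0-1}$ yields $a_{m_0}>b_{m_0}$, so that at least $m_0$ of the $b_k$ lie below $a_{m_0}$ while only $m_0-1$ of the $a_k$ lie strictly below it. Evaluating $\rho$ just to the left of $a_{m_0}$ and using $b_k\le a_{m_0}$ for the relevant indices then collapses the estimate to $\rho(a_{m_0}^-)\le S^b_{m_0}-S^a_{m_0}<0$, contradicting $\rho\ge 0$. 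The two genuine obstacles here are identifying the correct evaluation point (just left of $a_{m_0}$, forced by the minimality of $m_0$) and handling coincidences among the $a_k$ and $b_k$, where the counting functions jump by more than one; I would dispose of the latter either by passing to one-sided limits of the right-continuous counts or by a routine perturbation argument, since both $\rho\ge 0$ and \eqref{eq:weak} are closed conditions in the data $(a,b)$.
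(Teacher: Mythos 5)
Your proposal is correct and follows essentially the same route as the paper: the same closed form $\rho(t)=(p-q)\,t-\sum_{k\le p}a_k+\sum_{k\le q}b_k$ in terms of the counting functions, the same case analysis on $p\gtrless q$, and Bernstein's theorem for the complete-monotonicity equivalence. The only differences are cosmetic: you verify the Laplace identity termwise by direct integration where the paper writes $g=L(\sigma)L(\tau)=L(\sigma\ast\tau)$ and computes the convolution, and you spell out in full the converse direction ($\rho\geq 0\Rightarrow b\prec_w a$, via the minimal violating index $m_0$ and evaluation near $a_{m_0}$), which the paper compresses into ``considering the cases $k_0<l_0$ and $k_0>l_0$ separately yields the result.''
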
 
We remark that in the case of $n=2$ and $a_1=0$, $b_1=a$, $b_2=b$ and $a_2=a+b$ complete monotonicity of the corresponding function $g$ is immediate since 
$$
g(s)=\frac{1}{s^2}\left(1-e^{-(a+b)s}+e^{-as}-e^{-bs}\right)=\frac{1-e^{-as}}{s}\frac{1-e^{-bs}}{s}.
$$

\begin{proof}[Proof of Lemma \ref{lemma_wsmcm}]
Let $\mathd \sigma=\sum_{k=1}^n\left(\mathd\epsilon_{a_k}-\mathd\epsilon_{b_k}\right)$, where $\epsilon_a$ is the point mass at $a$, and $\mathd \tau(t)=v(t)\mathd t$, with $v(t)=t\mathbbm{1}_{[0,\infty)}(t)$.
Then 
$$
g=L(\sigma)L(\tau)=L(\sigma\ast \tau),
$$
and 
\begin{align*}
\mathd( \sigma\ast \tau)(t)&=
\left(\sum_{k=1}^n v(t-a_k)-\sum_{k=1}^nv(t-b_k)\right)\mathd t\\
&=
\left(\sum_{k=1}^n (t-a_k)\mathbbm{1}_{[a_k,\infty)}(t)-\sum_{k=1}^n (t-b_k)\mathbbm{1}_{[b_k,\infty)}(t)\right) \mathd t
\\&= \rho(t)\mathd t\,.
\end{align*}
This yields the relation
\begin{align*}
g(s)=\int_0^\infty e^{-st}\rho(t)\mathd t.
\end{align*}
According to Bernstein's theorem the function $g$ is completely monotonic if and only if $\rho$ is non-negative, which is the case if and only if $(b_k)_{k=1}^n\prec_w(a_k)_{k=1}^n$. To see this we may, for given $t$, choose $k_0$ and $l_0$ such that 
$a_{k_0}\leq t<a_{k_0+1}$, and $b_{l_0}\leq t<b_{l_0+1}$. Then we can write
\begin{align*}
\rho(t)=
(k_0-l_0)t-\sum_{k=1}^{k_0} a_k+\sum_{k=1}^{l_0}b_k
\end{align*} and considering the cases $k_0<l_0$ and $k_0>l_0$ separately yields the result.
%
%
\end{proof}
Lemma \ref{lemma_wsmcm} is extended to the following result, using partial integration.
\begin{lemma}\label{lemma_wsmcm2}
	Let $\ell\in\mathbb{N}$. The function
	\[g_\ell(s)=\dfrac{\sum_{k=1}^n e^{-sa_k}-\sum_{k=1}^ne^{-sb_k}}{s^{1+\ell}}\]
	has integral representation
	\begin{align*}
	g_\ell(s)=\frac{1}{\ell!}\int_0^\infty e^{-st}\rho_\ell(t)\mathd t,
	\end{align*}
	where
	\begin{align}
	\rho_\ell(t) = \sum_{k=1}^n (t-a_k)^{\ell}\mathbbm{1}_{[a_k,\infty)}(t)-\sum_{k=1}^n (t-b_k)^\ell\mathbbm{1}_{[b_k,\infty)}(t)\,.
	\end{align}
	Thus, $g_\ell$ is completely monotonic if and only if $\rho_\ell$ is non-negative.
\end{lemma}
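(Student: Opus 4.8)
The plan is to prove the integral representation by induction on $\ell$, taking the base case $\ell=1$ to be exactly Lemma \ref{lemma_wsmcm}: there $g=g_1$ and $\rho=\rho_1$, and the representation $g(s)=\int_0^\infty e^{-st}\rho(t)\,\mathd t$ coincides with the asserted formula since $1/1!=1$. The driving observation for the inductive step is that the defining formula gives $g_\ell(s)=g_{\ell-1}(s)/s$, so passing from $g_{\ell-1}$ to $g_\ell$ is division by $s$ in the Laplace domain, which corresponds to antidifferentiation in the $t$-domain. This is precisely where partial integration enters.

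First I would record the relation between consecutive densities,
\[
\frac{1}{\ell!}\rho_\ell(t)=\int_0^t \frac{1}{(\ell-1)!}\rho_{\ell-1}(u)\,\mathd u,
\]
which follows termwise from $\int_c^t (u-c)^{\ell-1}/(\ell-1)!\,\mathd u=(t-c)^\ell/\ell!$ for $t\ge c$ (both sides vanishing for $t<c$), applied with $c=a_k$ and $c=b_k$. Next I would establish the Laplace-domain identity that for a locally integrable $f$ of at most polynomial growth,
\[
\frac{1}{s}\int_0^\infty e^{-st}f(t)\,\mathd t=\int_0^\infty e^{-st}\Bigl(\int_0^t f(u)\,\mathd u\Bigr)\mathd t,\qquad s>0,
\]
which is proved by integrating by parts, the boundary terms vanishing because the primitive of $f$ also grows at most polynomially while $e^{-st}\to0$. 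Combining the two displays with the inductive hypothesis $g_{\ell-1}(s)=\int_0^\infty e^{-st}\tfrac{1}{(\ell-1)!}\rho_{\ell-1}(t)\,\mathd t$ gives
\[
g_\ell(s)=\frac{g_{\ell-1}(s)}{s}=\int_0^\infty e^{-st}\frac{1}{\ell!}\rho_\ell(t)\,\mathd t,
\]
which closes the induction. (An alternative route, parallel to Lemma \ref{lemma_wsmcm}, would be to write $g_\ell=L(\sigma)L(\tau_\ell)=L(\sigma*\tau_\ell)$ with $\sigma=\sum_{k=1}^n(\epsilon_{a_k}-\epsilon_{b_k})$ and $\tau_\ell$ the measure with density $t^\ell/\ell!$ on $[0,\infty)$, whose Laplace transform is $s^{-(\ell+1)}$, and then compute the convolution density to be $\tfrac{1}{\ell!}\rho_\ell$.)

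Finally, the complete-monotonicity characterization is immediate from Bernstein's theorem, exactly as in Lemma \ref{lemma_wsmcm}: since $g_\ell$ is the Laplace transform of the locally integrable density $\tfrac{1}{\ell!}\rho_\ell$, it is completely monotonic if and only if that density is non-negative, i.e.\ if and only if $\rho_\ell\ge0$. I expect the only technical point — not a genuine obstacle — to be confirming that $\rho_\ell$ has at most polynomial growth, so that the integration-by-parts boundary terms vanish and every Laplace integral converges for $s>0$. A short expansion of $\sum_k\bigl((t-a_k)^\ell-(t-b_k)^\ell\bigr)$ for large $t$ shows the $t^\ell$ terms cancel and $\rho_\ell(t)=O(t^{\ell-1})$, which is amply sufficient.
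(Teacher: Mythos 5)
Your proof is correct and follows essentially the same route as the paper, which proves this lemma simply by remarking that Lemma \ref{lemma_wsmcm} extends ``using partial integration'': your induction on $\ell$, with the step $g_\ell(s)=g_{\ell-1}(s)/s$ handled by integration by parts (and the termwise identity $\tfrac{1}{\ell!}\rho_\ell(t)=\int_0^t\tfrac{1}{(\ell-1)!}\rho_{\ell-1}(u)\,\mathd u$), is precisely that partial-integration argument, spelled out in full. Your parenthetical convolution alternative is likewise just the direct generalisation of the paper's proof of Lemma \ref{lemma_wsmcm}, so nothing further is needed.
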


\begin{lemma}
 \label{lemma:limit}
 Let $f\in\mathcal E_p$ and $1\leq \ell \leq p$. Suppose that $a_1,\ldots,a_n,b_1,\ldots,b_n$ are non-negative and that $\Delta_m=0$ for all $m$ with $0\leq m\leq \ell$, where
 $$
 \Delta_m=\sum_{j=1}^n\left(a_j^m-b_j^m\right), \quad m\geq 0.
 $$
 Then for $x\geq 2\max\{a_n,b_n\}$ we have 
 $$
 \partial_x^{p-\ell}\log W_f(x)=\sum_{k=1}^{\infty}\frac{1}{(x+\lambda_k)^{p+1}}\sum_{m=\ell +1}^{\infty}\Delta_m\frac{C_{m,\ell}}{(x+\lambda_k)^{m-\ell-1}},
 $$                                 
 where
 $$C_{m,\ell} = \begin{cases}
 \begin{pmatrix}
 \ell-p\\m
 \end{pmatrix},\qquad \ell<p \\\\
 \dfrac{(-1)^{m+1}}{m}, \qquad \ell=p
 \end{cases}
 $$
 and
 $$
 \sum_{m=\ell +1}^{\infty}
 \Delta_m\frac{C_{m,l}}{(x+\lambda_k)^{m-\ell-1}}
 $$
 is bounded with a bound independent of $x$ and $k$.
\end{lemma}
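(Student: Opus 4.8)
The plan is to work directly from the Hadamard factorisation $f=e^{q}P$ rather than from the Laplace representation \eqref{eq:hlp-lemma}, since what is needed is the explicit expansion in powers of $(x+\lambda_k)^{-1}$. Writing $\log W_f(x)=\sum_{j=1}^n\bigl(\log f(x+a_j)-\log f(x+b_j)\bigr)$ and differentiating $p-\ell$ times, I would first dispose of the polynomial contributions. The polynomial $q$ has degree at most $p$, so $q^{(p-\ell)}$ has degree at most $\ell$; likewise, in each elementary factor the Weierstrass correction becomes, after $p-\ell$ derivatives, a polynomial of degree at most $\ell$. For any polynomial $P_0$ of degree at most $\ell$ one has $\sum_{j=1}^n\bigl(P_0(x+a_j)-P_0(x+b_j)\bigr)=\sum_{m=0}^\ell c_m\Delta_m$ for suitable constants $c_m$, and this vanishes precisely because $\Delta_0=\cdots=\Delta_\ell=0$. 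Hence, after differencing, only the genuine singular part of each factor survives.

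Second, I would isolate the single-zero contribution. Termwise differentiation of the canonical product (legitimate since the product converges locally uniformly for $\Re z>0$) shows that the surviving piece coming from the zero at $-\lambda_k$ is, up to the constant $(-1)^{p-\ell-1}(p-\ell-1)!$, the function $(z+\lambda_k)^{-(p-\ell)}$ when $\ell<p$, and the function $\log(z+\lambda_k)$ when $\ell=p$. Setting $y=x+\lambda_k$ and expanding $\sum_{j}\bigl((y+a_j)^{-(p-\ell)}-(y+b_j)^{-(p-\ell)}\bigr)$ by the binomial series $(1+a_j/y)^{-(p-\ell)}=\sum_{m\ge0}\binom{\ell-p}{m}(a_j/y)^m$ (respectively the logarithmic series when $\ell=p$) produces $\sum_{m\ge0}(\text{coeff})_m\,\Delta_m\,y^{-(p-\ell)-m}$. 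Since $\Delta_m=0$ for $0\le m\le\ell$, the sum starts at $m=\ell+1$, and extracting the factor $(x+\lambda_k)^{-(p+1)}$ gives exactly the inner series; matching powers identifies $C_{m,\ell}$, the two cases $\ell<p$ and $\ell=p$ reflecting differentiation of a power versus differentiation of the logarithm.

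The main obstacle, which forms the final step, is to justify summing these single-zero contributions over $k$ and interchanging the two infinite sums. The subtlety is that $\sum_k(z+\lambda_k)^{-(p-\ell)}$ need not converge when $p-\ell\le p$, so the factors cannot be summed individually; one must keep each $j$-difference intact, whereupon the vanishing of the first $\ell$ moments forces decay of order $(x+\lambda_k)^{-(p+1)}$, and $\sum_k(x+\lambda_k)^{-(p+1)}<\infty$ by the genus hypothesis. For absolute convergence and Fubini I would use the assumption $x\ge2\max\{a_n,b_n\}$: writing $M=\max\{a_n,b_n\}$, every ratio $a_j/(x+\lambda_k)$ and $b_j/(x+\lambda_k)$ is at most $1/2$, while $|\Delta_m|\le2nM^m$ and $|C_{m,\ell}|$ grows only polynomially in $m$. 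The inner series is therefore dominated by $\sum_m\mathrm{poly}(m)\,2^{-m}$, a bound independent of $x$ and $k$, which is exactly the asserted uniform boundedness of the inner sum; multiplied by the convergent $\sum_k(x+\lambda_k)^{-(p+1)}$, this legitimises the interchange and yields the stated identity.
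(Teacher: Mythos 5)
Your proposal is correct and follows essentially the same route as the paper's proof: expand via the Hadamard factorisation, kill the polynomial contributions (from $q$ and the Weierstrass exponential factors) using $\Delta_0=\cdots=\Delta_\ell=0$, expand the surviving terms $(x+\lambda_k+a_j)^{-(p-\ell)}$ (resp.\ $\log(x+\lambda_k+a_j)$ when $\ell=p$) in binomial (resp.\ logarithmic) series, and obtain the uniform bound from $|\Delta_m|\leq 2nM^m$ together with the ratios being at most $1/2$ for $x\geq 2\max\{a_n,b_n\}$. Your added care in keeping each $j$-difference intact before summing over $k$ makes explicit a point the paper's write-up treats implicitly, but it is the same argument.
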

\begin{proof}
 First we consider the case $\ell<p$. From the Hadamard representation of $f$ (cf.\ \cite{hlp}) it follows that
 \begin{align*}
 &\partial_x^{p-\ell}\log W_f(x)=\sum_{j=1}^n\left(q^{(p-\ell)}(x+a_j)-q^{(p-\ell)}(x+b_j)\right)\\
 &+\sum_{k=1}^{\infty}\left\{(-1)^{p-\ell-1}(p-\ell-1)!\sum_{j=1}^n\left(\frac{1}{(x+\lambda_k+a_j)^{p-\ell}}-\frac{1}{(x+\lambda_k+b_j)^{p-\ell}}\right)\right.\\
 &\hspace{1cm}\left. +\sum_{i=p-\ell}^p\frac{(-1)^i}{\lambda_k^i}\frac{\Gamma(i)}{\Gamma(i-p+\ell+1)}\sum_{j=1}^n\left((x+a_j)^{i-p+\ell}-(x+b_j)^{i-p+\ell}\right)\right\}.
 \end{align*}
 Next we notice that (by the binomial theorem), when $0\leq s\leq \ell$, 
 $$
 \sum_{j=1}^n\left((x+a_j)^s-(x+b_j)^s\right)=0.
 $$
 Therefore the sums over $j$ in the first and third line of the right hand side in the formula above are equal to $0$. Hence we have obtained
  \begin{align*}
 &\partial_x^{p-\ell}\log W_f(x)=\\
 &(-1)^{p-\ell-1}(p-\ell-1)!\sum_{k=1}^{\infty}\sum_{j=1}^n\left(\frac{1}{(x+\lambda_k+a_j)^{p-\ell}}-\frac{1}{(x+\lambda_k+b_j)^{p-\ell}}\right).
 \end{align*}
Let, for brevity, $c_n=\max\{a_n,b_n\}$. Now, assuming that $x\geq 2c_n$ it follows that 
\begin{align*}
\frac{1}{(x+\lambda_k+a_j)^{p-\ell}}&=\frac{1}{(x+\lambda_k)^{p-\ell}}\frac{1}{(1+a_j/(x+\lambda_k))^{p-\ell}}\\
&=\frac{1}{(x+\lambda_k)^{p-\ell}}\sum_{m=0}^{\infty}\begin{pmatrix}\ell-p\\m\end{pmatrix}\left(\frac{a_j}{x+\lambda_k}\right)^m,
\end{align*}
by the binomial series. This yields
 \begin{align*}
 &\frac{(-1)^{p-\ell-1}}{(p-\ell-1)!}\partial_x^{p-\ell}\log W_f(x)\\
 &=\sum_{k=1}^{\infty}\sum_{j=1}^n\left(\frac{1}{(x+\lambda_k+a_j)^{p-\ell}}-\frac{1}{(x+\lambda_k+b_j)^{p-\ell}}\right)\\
 &=\sum_{k=1}^{\infty}\frac{1}{(x+\lambda_k)^{p-\ell}}\sum_{m=0}^{\infty}\begin{pmatrix}\ell-p\\m\end{pmatrix}\left(\frac{1}{x+\lambda_k}\right)^m \sum_{j=1}^n\left(a_j^m-b_j^m\right)\\
 &=\sum_{k=1}^{\infty}\frac{1}{(x+\lambda_k)^{p-\ell}}\sum_{m=\ell+1}^{\infty}\begin{pmatrix}\ell-p\\m\end{pmatrix}\left(\frac{1}{x+\lambda_k}\right)^m \Delta_m\\
 &=\sum_{k=1}^{\infty}\frac{1}{(x+\lambda_k)^{p+1}}\sum_{m=\ell+1}^{\infty}\begin{pmatrix}\ell-p\\m\end{pmatrix}\left(\frac{1}{x+\lambda_k}\right)^{m-\ell-1} \Delta_m.
 \end{align*}
 It remains to be verified that the inner sum is bounded for $x\geq 2c_n$ with a bound independent of $k$. We get, using $|\Delta_m|\leq 2nc_n^m$
 \begin{align*}
  \lefteqn{\sum_{m=\ell+1}^{\infty}\left|\begin{pmatrix}\ell-p\\m\end{pmatrix}\left(\frac{1}{x+\lambda_k}\right)^{m-\ell-1} \Delta_m\right|}\\
  &\leq 2nc_n^{\ell+1}\sum_{m=\ell+1}^{\infty}\left|\begin{pmatrix}\ell-p\\m\end{pmatrix}\left(\frac{c_n}{x+\lambda_k}\right)^{m-\ell-1}\right| \\
  &\leq 2nc_n^{\ell+1}\sum_{m=\ell+1}^{\infty}\left|\begin{pmatrix}\ell-p\\m\end{pmatrix}\right|2^{-m+\ell+1}.
 \end{align*}
This last series converges and is independent of $k$.

In the case $\ell=p$, using that all polynomial terms vanish, we get
\begin{align*}
\log& W_f(x)\\
&= \sum_{k=1}^{\infty}\sum_{j=1}^n\left(\log (x+\lambda_k+a_j)-\log (x+\lambda_k+b_j) \right)\\
&=\sum_{k=1}^{\infty}\sum_{j=1}^n\left(
\sum_{m=1}^{\infty} \frac{(-1)^{m+1}}{m}\left(\frac{a_j}{x+\lambda_k}\right)^m
-\sum_{m=1}^{\infty} \frac{(-1)^{m+1}}{m}\left(\frac{b_j}{x+\lambda_k}\right)^m
\right)\\
&=\sum_{k=1}^{\infty}\left(
\sum_{m=1}^{\infty} \frac{(-1)^{m+1}}{m}
\frac{\Delta_m}{(x+\lambda_k)^m}
\right).
\end{align*}
 \end{proof}

\begin{proof}[Proof of Theorem \ref{thm:main1}]
Notice that 
$$\log W_f(z)=\sum_{k=1}^n\left(\log f(z+a_k)-\log f(z+b_k)\right).
$$
Hence for $\Re z>0$, by Lemma \ref{lemma:int-hlp},
	\begin{align*}
		(-1)^{p-1}\partial_z^p\log W_f(z)&= \int_0^\infty e^{-sz}s^{p-1}h(s)\left(\sum_{k=1}^n e^{-sa_k}-\sum_{k=1}^ne^{-sb_k}\right)\mathd s\\
		&=\int_0^\infty e^{-sz}s^{p+1}h(s)g(s)
		\mathd s,
	\end{align*}
	with $g$ as in \eqref{eq:def-g}.
We have, with $\tau=\sum_{m=1}^{\infty}\epsilon_{\lambda_m}$, from Lemma \ref{lemma_wsmcm},
$$
h(s)g(s)=L(\tau)(s)L(\rho)(s)=L(\tau\ast\rho)(s)=L(\phi)(s).
$$
This gives us
	\begin{align*}
	(-1)^{p-1}\partial_z^p\log W_f(z)
	&=\int_0^\infty e^{-sz}s^{p+1}\int_0^\infty e^{-st}\phi(t)\mathd t \mathd s\\
	&=\int_0^\infty \int_0^\infty s^{p+1} e^{-s(z+t)} \mathd s\phi(t)\mathd t\\
	&=\Gamma(p+2)\int_0^\infty \frac{\phi(t)\mathd t}{(z+t)^{p+2}}\,.
	\end{align*}
	If $b\prec_wa$ then, by Lemma \ref{lemma_wsmcm}, $\rho$ and hence also $\phi$ is non-negative and thus $(-1)^{p-1}\partial_z^p\log W_f(z)$ is a generalised Stieltjes function of order $p+2$.
\end{proof}

\begin{proof}[Proof of Theorem \ref{thm:main2}]
 With the extra assumption that $\sum_{k=1}^na_k=\sum_{k=1}^nb_k$ the function $g$ is bounded for $s$ near $0$. In fact, using l'Hospital's rule,
 $$
 \lim_{s\to 0}g(s)=\frac{1}{2}\sum_{k=1}^n\left(a_k^2-b_k^2\right).
 $$
 Therefore the function of $s$ (for fixed $x>0$) $e^{-sx}s^{p}h(s)g(s)$ is integrable on $[0,\infty)$ implying that 
 $$
 \Phi(x)=\int_0^{\infty} e^{-sx}s^{p}h(s)g(s)\mathd s
 $$
 is defined. Furthermore, 
 $$
 \Phi'(x)=-\int_0^{\infty} e^{-sx}s^{p+1}h(s)g(s)\mathd s=(-1)^p\partial_x^p\log W_f(x),
 $$
 showing that  
 $$
 (-1)^{p}\partial_x^{p-1}\log W_f(x)=\int_0^{\infty} e^{-sx}s^{p}h(s)g(s)\mathd s+c
 $$
 for a constant $c$. Lemma \ref{lemma:limit}, Lebesgue's theorem on dominated convergence together with the fact that $\sum_{m=1}^\infty 1/\lambda_m^{p+1}<\infty$ yield 
 \begin{align*}
 c=\lim_{x\to +\infty}(-1)^{p}\partial_x^{p-1}\log W_f(x)=0.
 \end{align*}
Arguing as in the proof of Theorem \ref{thm:main1} gives us
\begin{align*}
(-1)^{p}\partial_z^{p-1}\log W_f(z)=\Gamma(p+1)\int_0^\infty \frac{\phi(t)\mathd t}{(z+t)^{p+1}}\,,
\end{align*}
and if  $b\prec_wa$, $ (-1)^{p}\partial_z^{p-1}\log W_f(z)$ is a generalised Stieltjes function of order $p+1$. 
 \end{proof}

\begin{proof}[Proof of Theorem \ref{thm:main3}]
	The proof will be done by induction on $\ell$. For $\ell=1$, the result holds by Theorem \ref{thm:main2}.
	If the statement holds for $\ell-1$, we shall show that it holds for $\ell$. The conditions $\sum_{k=1}^{n}a_k^j=\sum_{k=1}^{n}b_k^j$ for all $j\leq \ell$ implies that the $g_\ell(s)=g(s)/s^{\ell-1}$ is bounded for $s$ near $0$.
	Therefore the function of $s$ (for fixed $x>0$) $e^{-sx}s^{p-\ell+1}h(s)g(s)$ is integrable on $[0,\infty)$ so that 
	$$
	\Phi(x)=\int_0^{\infty} e^{-sx}s^{p}h(s)g_\ell(s)\mathd s
	$$
	is defined. Furthermore, 
	$$
	\Phi'(x)=-\int_0^{\infty} e^{-sx}s^{p}h(s)g_{\ell-1}(s)\mathd s=(-1)^{p-\ell+1}\partial_x^{p-\ell+1}\log W_f(x),
	$$
	by the induction hypothesis. This shows that  
	$$
	(-1)^{p-\ell+1}\partial_x^{p-\ell}\log W_f(x)=\int_0^{\infty} e^{-sx}s^{p}h(s)g_\ell(s)\mathd s+c
	$$
	for a constant $c$. Using Lemma \ref{lemma:limit}, we get that $c=0$ and this gives
	\begin{align*}
	(-1)^{p-\ell+1}\partial_z^{p-\ell}\log W_f(z)=\Gamma(p+1)\int_0^\infty \frac{\phi_\ell(t)\mathd t}{(z+t)^{p+1}}\,.
	\end{align*}
	If $\rho_\ell$ is non-negative, so is $\phi_\ell$, and thus $(-1)^{p-\ell+1}\partial_z^{p-\ell}\log W_f(z  )$ is a generalised Stieltjes function of order $p+1$. This completes the induction step.
\end{proof}

Let us end this section by describing a variation of Theorem \ref{thm:main1} and Theorem \ref{thm:main2}. For a given finite sequence $a=(a_1,\ldots,a_n)$ of non-negative numbers and $f\in \mathcal{E}_p$ with zeros $\mathcal{Z}_f$,
we consider the sequence of zeros of the function 
$$
G_a(z)=\prod_{j=1}^nf(z+a_j),
$$
denoted by $\mathcal{Z}_{G_a}$, which is equal to $\{\rho-a_j\, |\, 1\leq j\leq n,\rho\in \mathcal{Z}_f\}$ counting multiplicities. We then denote by $a-\mathcal{Z}_f$ the non-decreasing rearrangement $(\mu_{a,k})_k$ of $-\mathcal{Z}_{G_a}$. For two sequences $a$ and $b$ we say that $b-\mathcal{Z}_f\prec_w a-\mathcal{Z}_f$ if
$$
\sum_{k=1}^m\mu_{a,k}\leq \sum_{k=1}^m\mu_{b,k}
$$
for all $m\geq 1$. 


  The key assumption is thus weak supermajorisation of these two infinite sequences. The result is stated as follows.
\begin{prop}
\label{prop:variant}
	\label{thm:main4} Suppose that $p\geq 1$ and $0\leq \ell\leq p$. If $\sum_{k=1}^na_k^j=\sum_{k=1}^nb_k^j$ for $0\leq j\leq \ell$ and if $b-\mathcal{Z}_f\prec_w a-\mathcal{Z}_f$, then
	$(-1)^{p-\ell+1}\partial_z^{p-\ell}\log W_f(z)$
	is a generalised Stieltjes function of order $p-\ell+2$ with the representation
	\begin{align}
	(-1)^{p-\ell+1}\partial_z^{p-\ell}\log W_f(z)&=\int_0^\infty e^{-sz}s^{p-\ell +1}h(s)g(s)\mathd s\\
	&= \Gamma(p-\ell+2)\int_0^\infty \frac{\phi(t)\mathd t}{(z+t)^{p-\ell+2}},
	\end{align}
	where  $g$ and $\phi$ are as in \eqref{eq:def-g} and \eqref{eq:def-phi}.
\end{prop}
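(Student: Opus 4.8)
The plan is to reduce the statement to the theorems already proved, together with a single application of Lemma~\ref{lemma_wsmcm} read off for two \emph{infinite} sequences. The Laplace representation requires no new work: since $g_\ell(s)=g(s)/s^{\ell-1}$, one has $s^ph(s)g_\ell(s)=s^{p-\ell+1}h(s)g(s)$, so the integral $\int_0^\infty e^{-sz}s^{p-\ell+1}h(s)g(s)\,\mathd s$ is exactly the one delivered by Theorem~\ref{thm:main3} when $1\leq\ell\leq p$ and by Theorem~\ref{thm:main1} when $\ell=0$. In particular the moment hypotheses $\sum_{k=1}^n a_k^j=\sum_{k=1}^n b_k^j$, $0\leq j\leq\ell$, guarantee convergence at $s=0$ and, through Lemma~\ref{lemma:limit}, the vanishing of the additive constant. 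The second representation then follows from the Fubini computation used in the proofs of Theorems~\ref{thm:main1} and~\ref{thm:main2}: writing $h(s)g(s)=L(\phi)(s)$ (Lemma~\ref{lemma_wsmcm} with $\phi=\tau\ast\rho$) and interchanging the order of integration gives
\[
\int_0^\infty e^{-sz}s^{p-\ell+1}h(s)g(s)\,\mathd s=\Gamma(p-\ell+2)\int_0^\infty\frac{\phi(t)\,\mathd t}{(z+t)^{p-\ell+2}},
\]
the interchange being justified by Tonelli once $\phi\geq0$ and the finiteness of the right-hand side being automatic, as it equals the finite Laplace integral.

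The crux is thus to show that $\phi\geq0$ under the hypothesis $b-\mathcal{Z}_f\prec_w a-\mathcal{Z}_f$. I would exploit the factorisation $\phi=\tau\ast\rho=\tau\ast\sigma\ast v$, where $\sigma=\sum_{k=1}^n(\epsilon_{a_k}-\epsilon_{b_k})$, $\tau=\sum_{m=1}^\infty\epsilon_{\lambda_m}$ and $v(t)=t\mathbbm{1}_{[0,\infty)}(t)$. The signed measure
\[
\tau\ast\sigma=\sum_{m=1}^\infty\sum_{k=1}^n\bigl(\epsilon_{\lambda_m+a_k}-\epsilon_{\lambda_m+b_k}\bigr)
\]
carries its positive point masses exactly at the points of $-\mathcal{Z}_{G_a}$ and its negative masses at those of $-\mathcal{Z}_{G_b}$, i.e.\ at the rearranged sequences $(\mu_{a,j})_j$ and $(\mu_{b,j})_j$. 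Convolving with $v$ then yields
\[
\phi(t)=\sum_{j}(t-\mu_{a,j})\mathbbm{1}_{[\mu_{a,j},\infty)}(t)-\sum_{j}(t-\mu_{b,j})\mathbbm{1}_{[\mu_{b,j},\infty)}(t),
\]
which is precisely the function $\rho$ of \eqref{eq:def-rho}, but built from the two infinite sequences $(\mu_{a,j})$ and $(\mu_{b,j})$. Since $\lambda_m\to\infty$, for each fixed $t$ only finitely many terms survive, so with $K_0$ and $L_0$ the numbers of $\mu_{a,j}$ and $\mu_{b,j}$ not exceeding $t$ one has $\phi(t)=(K_0-L_0)t-\sum_{j=1}^{K_0}\mu_{a,j}+\sum_{j=1}^{L_0}\mu_{b,j}$. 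Treating the cases $K_0\geq L_0$ and $K_0<L_0$ separately, and using that $\mu_{a,j}>t$ for $j>K_0$, the argument of Lemma~\ref{lemma_wsmcm} carries over and shows $\phi(t)\geq0$ exactly when $\sum_{j=1}^m\mu_{a,j}\leq\sum_{j=1}^m\mu_{b,j}$ for all $m$, that is, when $b-\mathcal{Z}_f\prec_w a-\mathcal{Z}_f$.

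The step I expect to be the main obstacle is precisely this passage to the infinite sequences: one must check that the convolution $\tau\ast\sigma\ast v$ really does produce the pointwise formula above with only finitely many active summands (which is ensured by $\lambda_m\to\infty$ and $\sum_m\lambda_m^{-p-1}<\infty$), and that the finite case analysis of Lemma~\ref{lemma_wsmcm} transfers verbatim to the rearrangements $(\mu_{a,j})$ and $(\mu_{b,j})$. Everything else is inherited from Theorems~\ref{thm:main1}--\ref{thm:main3}: once $\phi\geq0$ is in hand, the order-$(p-\ell+2)$ Stieltjes representation and the vanishing of the additive constant follow as above, completing the proof.
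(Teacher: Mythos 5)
Your proposal is correct and takes essentially the same route as the paper: the paper's own proof is a two-line sketch that reruns the argument of Theorem~\ref{thm:main3} with the splitting $s^{p-\ell+1}h(s)g(s)$ in place of $s^{p}h(s)g_\ell(s)$ (these integrands coincide since $g_\ell=g/s^{\ell-1}$) and then applies the Fubini step with $hg=L(\phi)$, which is exactly your reduction. Your explicit verification that $b-\mathcal{Z}_f\prec_w a-\mathcal{Z}_f$ forces $\phi\geq 0$, by transferring the case analysis of Lemma~\ref{lemma_wsmcm} to the rearranged infinite sequences with only finitely many active terms for each fixed $t$, fills in precisely the step the paper leaves implicit.
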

\begin{proof}
The proof is similar to the proof of Theorem \ref{thm:main3}, except we consider
$$
\Phi(z)=\int_0^{\infty} e^{-sz}s^{p-\ell+1}h(s)g(s)\mathd s
$$
and show that it equals $\Gamma(p-\ell+2)\int_0^\infty \frac{\phi(t)\mathd t}{(z+t)^{p-\ell+2}}$.
\end{proof}
\begin{example}
\label{rem:reordering} 
We illustrate Proposition \ref{prop:variant} using the Gamma function and the $G$-function of Barnes.
\begin{enumerate}
 \item[(a)] 
Let $f(z)=1/\Gamma(z+1)$, $a=(0,3,3)$, and $b=(1,1,4)$. In this case,
\begin{align*}
 a-\mathcal{Z}_f&=(1,2,3,4,4,4,5,5,5,6,6,6,\ldots)\\
 b-\mathcal{Z}_f&=(2,2,3,3,4,4,5,5,5,6,6,6,\ldots)
\end{align*}
and it follows that $b'=b-\mathcal{Z}_f\prec_w a-\mathcal{Z}_f=a'$. (In fact, we have  $\sum_{k=1}^mb'_k-\sum_{k=1}^ma'_k=1$ for $m\leq3$ and $\sum_{k=1}^mb'_k-\sum_{k=1}^ma'_k=0$ for $m\geq 4$.) The assumptions in Proposition \ref{prop:variant} thus hold for $\ell=2$.
 The graph of $\phi(t)=\sum_{m=1}^{\infty}\rho(t-m)$ is shown below (case (a)) and $-\log W_{f}\in \mathcal S_2$. Since 
	\[-\log W_{f}(x)= -\log \frac{(x+1)(x+4)}{(x+2)(x+3)}\]
	the assertion can also be obtained directly, noticing that $-\log W_{f}(x)=\theta(x+2)-\theta(x+4)$, where $\theta(x)=\log(1+1/x)$ is an ordinary Stieltjes function.
	\item[(b)] Let $f(z)=G(z+1)$, $a=(0,3/2,3/2)$, and $b=(1/2,1/2,2)$. In this case $\mathcal{Z}_f=-(1,2,2,3,3,3,\ldots)$, and the assumptions in Proposition \ref{prop:variant} are satisfied for $\ell=2$. The graph of the function $\phi(t)=\sum_{m=1}^{\infty}m\rho(t-m)$ is shown below (case (b)) and $-\log W_f\in \mathcal S_2$. Furthermore,  
	$$
	-\log W_f(x)=-\log \frac{G(x+1)G(x+5/2)^2}{G(x+3/2)^2G(x+3)}
	=\log \frac{\Gamma(x+1)\Gamma(x+2)}{\Gamma(x+3/2)^2}
	$$
	from which it can also be seen that it is a generalised Stieltjes function of order $2$. 
\end{enumerate}
\end{example}
\begin{table}[H]
	\begin{center}
		\begin{tabular}{cc}
			\includegraphics[height=50mm]{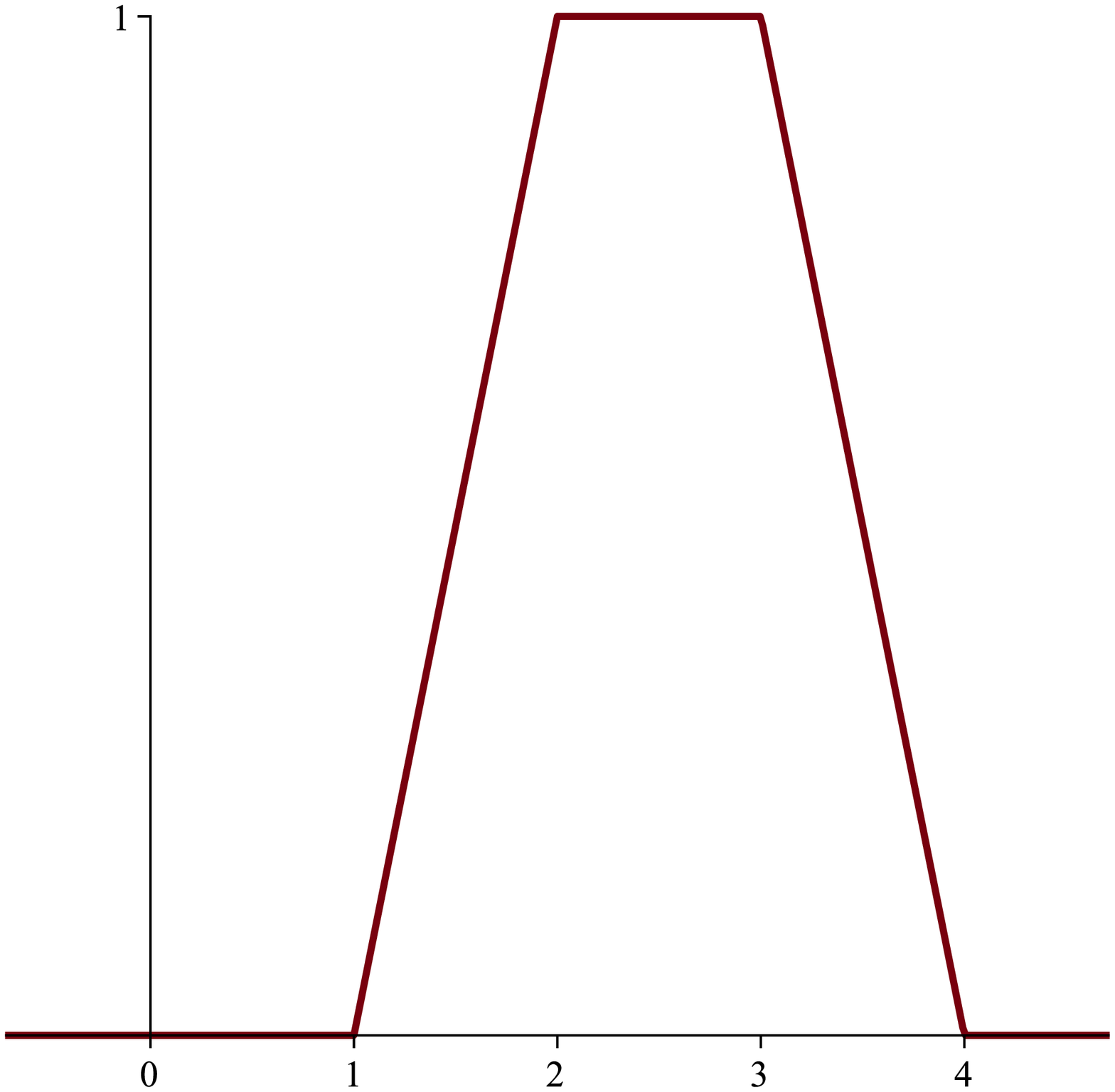}&
			\includegraphics[width=0.4\textwidth]{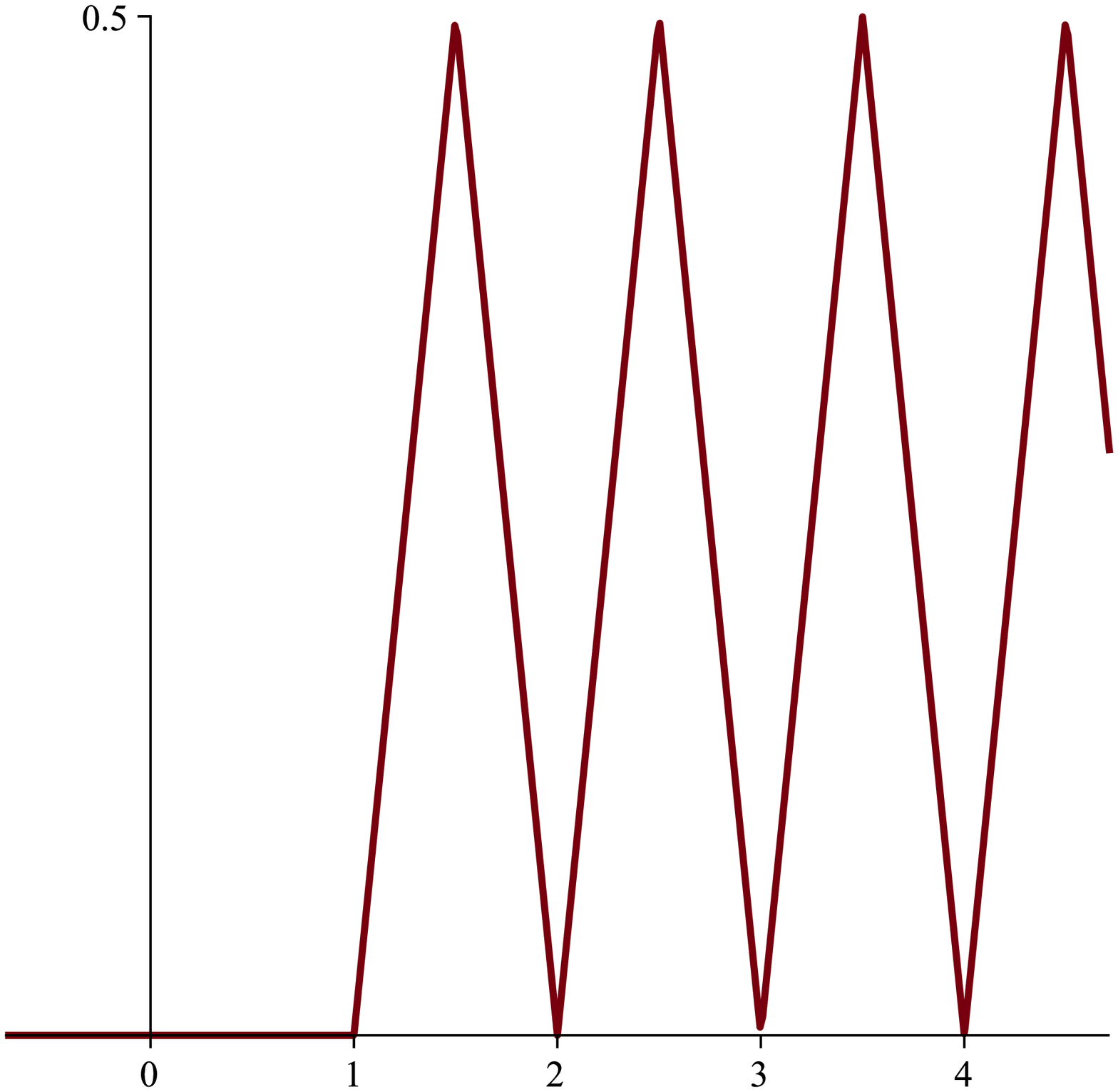}\\
			\scriptsize{$\phi$ in case (a)}&\scriptsize{$\phi$ in case (b)}
		\end{tabular}
		\captionof{figure}{The functions $\phi$ from Example \ref{rem:reordering} are non-negative.}
	\end{center}
\end{table}

\section{Behaviour on vertical lines}
\label{sec:vertical}
The motivation for the results in this section comes from the paper \cite{ismail1} in which  inequalities for the gamma (and $q$-gamma) function of complex arguments are investigated. One of the results in that paper states that the function $x\mapsto -\log | \Gamma(a+i\sqrt{x})|$ (where $a>0$)
has a completely monotonic derivative on $(0,\infty)$.

We investigate this type of questions for entire functions of finite genus with real zeros. Given such a function $f$, 
which monotonicity properties does the function 
$x\mapsto \log|f(a+i\sqrt{x})|$
have? 

Also in this section, $\epsilon_a$ denotes the point mass at $a$.
\begin{prop}
	\label{prop:vertical}
    Suppose that $f$ is a real entire function of genus $p$, and with only real 
	zeros $\{\lambda_k\}$.  Let 
	$$
    u(x)=\log|f(i\sqrt{x})|,
	$$
	and $m=\lfloor p/2\rfloor$ be the integer part of $p/2$. 
	Then $(-1)^m\partial_x^{m+1}u(x)$ is a generalized Stieltjes function of order $m+1$ with the representation
	$$
	(-1)^m\partial_x^{m+1}u(x)=\frac{m!}{2}\int_0^{\infty}\frac{\mathd \mu(t)}{(t+x)^{m+1}},
	$$
	where $\mu=\sum_{k=1}^{\infty}\epsilon_{\lambda_k^2}$.
\end{prop}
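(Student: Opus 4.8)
The plan is to reduce the claim to the Hadamard/Laplace machinery already available for the class $\mathcal E_m$. First I would exploit that $f$ is real entire: for $x>0$ the point $i\sqrt x$ is never a zero (the zeros $\lambda_k$ are real while $i\sqrt x$ is purely imaginary and nonzero), so $|f(i\sqrt x)|^2=f(i\sqrt x)\,f(-i\sqrt x)>0$ and
\[
2u(x)=\log\bigl(|f(i\sqrt x)|^2\bigr)=\log\bigl(f(i\sqrt x)\,f(-i\sqrt x)\bigr).
\]
The function $z\mapsto f(z)f(-z)$ is real, entire and even, hence of the form $G(z^2)$ for a real entire function $G$ whose zeros are exactly the points $\lambda_k^2\ge 0$: a simple zero $\lambda_k$ of $f$ contributes the pair $\pm\lambda_k$ to $f(z)f(-z)$ and thus a single zero $\lambda_k^2$ of $G$, while an $r$-fold zero of $f$ at the origin yields an $r$-fold zero of $G$ at $0$. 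Writing $\tilde G(x)=G(-x)$ I obtain $u(x)=\tfrac12\log\tilde G(x)$, where $\tilde G$ is real entire, has only the non-positive zeros $-\lambda_k^2$, and satisfies $\tilde G(x)=|f(i\sqrt x)|^2>0$ for $x>0$.

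The crucial bookkeeping step is to locate $\tilde G$ in $\mathcal E_m$. In the Hadamard factorization $f=e^{q}P$ the polynomial $q$ has degree at most $p$, so $q(z)+q(-z)$ is even of degree at most $p$, i.e.\ a polynomial of degree at most $\lfloor p/2\rfloor=m$ in $z^2$; the same holds for the correction exponents after symmetrisation, so the exponential polynomial of $\tilde G$ has degree at most $m$. For the zeros, $\sum_k (\lambda_k^2)^{-(m+1)}=\sum_k|\lambda_k|^{-(2m+2)}<\infty$ because $2m+2\ge p+1$, while the analogous sum with exponent $m$ may diverge (since $2m\le p$). Hence the genus-$m$ Hadamard representation is available and $\tilde G\in\mathcal E_m$. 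Now I would apply \eqref{eq:hlp-lemma} to $\tilde G$ with $p$ replaced by $m$, giving
\[
(-1)^m(\log\tilde G)^{(m+1)}(x)=\int_0^\infty e^{-sx}s^m h_{\tilde G}(s)\,\mathd s,\qquad h_{\tilde G}(s)=\sum_k e^{-\lambda_k^2 s}.
\]
Since $u=\tfrac12\log\tilde G$ and $h_{\tilde G}(s)=\int_0^\infty e^{-st}\,\mathd\mu(t)$ with $\mu=\sum_k\epsilon_{\lambda_k^2}$, a Tonelli interchange (all integrands being non-negative) together with $\int_0^\infty s^m e^{-(x+t)s}\,\mathd s=m!/(x+t)^{m+1}$ yields
\[
(-1)^m\partial_x^{m+1}u(x)=\frac12\int_0^\infty e^{-sx}s^m h_{\tilde G}(s)\,\mathd s=\frac{m!}{2}\int_0^\infty\frac{\mathd\mu(t)}{(t+x)^{m+1}},
\]
which is precisely the asserted generalised Stieltjes function of order $m+1$ (representing measure $\tfrac{m!}{2}\mu$, additive constant $0$).

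The main obstacle is the genus count in the second paragraph: the entire force of the statement, and the precise appearance of $m=\lfloor p/2\rfloor$, rests on verifying that squaring the zeros exactly halves the relevant convergence exponent while symmetrisation drops the exponential polynomial to degree $m$. A secondary point to handle carefully is the passage from $f$ to $\tilde G$ at the origin and at repeated zeros, so that the multiplicities encoded in $\mu=\sum_k\epsilon_{\lambda_k^2}$ match the zeros of $\tilde G$. Once $\tilde G\in\mathcal E_m$ is established the remaining steps are routine: the cited representation \eqref{eq:hlp-lemma}, the interchange justified by positivity, and an elementary Gamma integral. Alternatively, and in order to avoid any worry about the exact genus of $\tilde G$, one may bypass \eqref{eq:hlp-lemma} and differentiate the Hadamard series for $\log\tilde G$ term by term exactly $m+1$ times: every polynomial contribution has degree at most $m$ and is annihilated, and each factor $\log(1+x/\lambda_k^2)$ contributes $(-1)^m m!/(x+\lambda_k^2)^{m+1}$, reproducing the same formula.
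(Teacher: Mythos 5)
Your proposal is correct and follows essentially the same route as the paper: your $\tilde G$ is exactly the paper's auxiliary function $\kappa\in\mathcal E_m$ obtained from the Hadamard factorization (zeros $-\lambda_k^2$, exponential polynomial of degree at most $m=\lfloor p/2\rfloor$), after which both arguments invoke the Laplace representation \eqref{eq:hlp-lemma} with $p$ replaced by $m$ and integrate out $s$. The only difference is one of exposition: you spell out the genus and multiplicity bookkeeping that the paper dismisses with ``it easily follows,'' which is a worthwhile check but not a new idea.
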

\begin{proof}
	From the Hadamard factorization of $f$ it easily follows that
	$$
	|f(i\sqrt{x})|^2=\kappa(x),\quad x\geq 0,
	$$
	where
	$$
	\kappa(z)=e^{r(z)}\prod_{k=1}^{\infty}\left(1+\frac{z}{\lambda_k^2}\right)\, e^{\sum_{j=1}^m(-z/\lambda_k^2)^j/j}
	$$
	is an entire function of the class $\mathcal E_m$, $r$ being a real polynomial of degree at most $m$. The result now follows directly from \cite[Proposition 2.1]{hlp}.
\end{proof}
The case concerning $f(a+i\sqrt{x})$ is immediate since  $z\mapsto f(z+a)$ is again entire of genus $p$ and with real zeros only: 
\begin{cor}
	\label{cor:vertical-a}
	Let $f$ be a real entire function of genus $p$, with only real 
	zeros $\{\lambda_k\}$ and let $m=\lfloor p/2\rfloor$. For $a\in \mathbb R$ we have
	%
	$$
	(-1)^m\partial_x^{m+1}\log|f(a+i\sqrt{x})|=\frac{m!}{2}\int_0^{\infty}\frac{\mathd\mu_a(t)}{(t+x)^{m+1}},
	$$
	where $\mu_a=\sum_{k=1}^{\infty}\epsilon_{(a-\lambda_k)^2}$.
\end{cor}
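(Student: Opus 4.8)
The plan is to deduce the statement from Proposition~\ref{prop:vertical} by translating the variable. I would set $g(z)=f(z+a)$. Since $f$ is real entire and $a\in\mathbb R$, the function $g$ is again real and entire, its zeros are precisely the real numbers $\lambda_k-a$, and
$$
\log|g(i\sqrt x)|=\log|f(a+i\sqrt x)|.
$$
Thus, once $g$ is shown to satisfy the hypotheses of Proposition~\ref{prop:vertical} with the \emph{same} value of $m$, the desired identity follows by simply reading off the conclusion of that proposition for $g$.

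The point that genuinely needs checking is that $g$ has genus exactly $p$, so that $m=\lfloor p/2\rfloor$ is unchanged. First I would observe that the zeros of $g$ have the same convergence exponent as those of $f$: since $|\lambda_k-a|\sim|\lambda_k|$ as $k\to\infty$, the sum $\sum_k|\lambda_k-a|^{-q}$ converges exactly when $\sum_k|\lambda_k|^{-q}$ does, so the canonical product over $\{\lambda_k-a\}$ has the same genus as the canonical product over $\{\lambda_k\}$. Writing $f$ in Hadamard form $f(z)=e^{q(z)}P(z)$ with $\deg q\le p$, we have $f(z+a)=e^{q(z+a)}P(z+a)$; comparing $P(z+a)$ with the canonical product $\tilde P$ built directly from the zeros $\lambda_k-a$ shows that the ratio $P(z+a)/\tilde P$ is a zero-free entire function $e^{g_0(z)}$ with $g_0$ a polynomial of degree at most $p$. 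Since $\deg q(z+a)=\deg q\le p$, the function $g$ is real entire of genus $p$ with only real zeros $\{\lambda_k-a\}$, as required.

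With this in hand I would apply Proposition~\ref{prop:vertical} directly to $g$, whose conclusion reads $(-1)^m\partial_x^{m+1}\log|g(i\sqrt x)|=\tfrac{m!}{2}\int_0^\infty (t+x)^{-(m+1)}\,\mathd\nu(t)$ with $\nu=\sum_{k=1}^\infty\epsilon_{(\lambda_k-a)^2}$. Because $(\lambda_k-a)^2=(a-\lambda_k)^2$ we have $\nu=\mu_a$, and combining this with the identity $\log|g(i\sqrt x)|=\log|f(a+i\sqrt x)|$ yields exactly the asserted formula.

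The only real obstacle is the genus-invariance step; everything else is a substitution. If one is content to invoke the standard fact that the genus of a real entire function is unaffected by a real translation of its argument, the argument reduces to the two substitutions above, which is precisely why the corollary can be called immediate.
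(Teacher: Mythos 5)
Your proposal is correct and follows essentially the same route as the paper, whose entire proof is the one-line remark preceding the corollary: $z\mapsto f(z+a)$ is again real entire of genus $p$ with only real zeros $\{\lambda_k-a\}$, so Proposition \ref{prop:vertical} applies directly and yields the measure $\sum_{k}\epsilon_{(\lambda_k-a)^2}=\mu_a$. The genus-invariance under real translation that you elaborate (and could alternatively cite as a standard fact) is precisely the point the paper treats as immediate.
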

If $p\leq 1$ this corollary extends and sharpens the result mentioned in the beginning of this section, showing in particular that the derivative of $-\log|\Gamma(a+i\sqrt{x})|$ is a generalised Stieltjes function of order $2$, and hence completely monotonic. It is even logarithmically completely monotonic (see \cite{bkp}).  
\begin{cor}
	Let $f$ be a real entire function of genus $p$ and with only real 
	zeros $\{\lambda_k\}$, let $m=\lfloor p/2\rfloor$, $u(x)=\log|f(i\sqrt{x})|$ and $a>0$.
	Then the function $x\mapsto (-1)^m\partial_x^m(u(x+a)-u(x))$ is a generalized Stieltjes function of order $m+1$ with representation
	$$
	(-1)^m\partial_x^m(u(x+a)-u(x))=\frac{m!}{2}\int_0^{\infty}\frac{\mathd(\mu\ast m_a)(t)}{(x+t)^{m+1}},
	$$
	where $\mu=\sum_{k=1}^{\infty}\epsilon_{\lambda_k^2}$ and $m_a$ is Lebesgue measure on $(0,a)$.
\end{cor}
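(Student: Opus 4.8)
The plan is to reduce everything to Proposition \ref{prop:vertical} by integrating its representation once. Note first that for $x>0$ the argument $i\sqrt{x}$ is never a zero of $f$ (the zeros being real), so $u$ is smooth on $(0,\infty)$ and the manipulations below are legitimate. Writing $v=\partial_x^m u$, the chain rule and the fundamental theorem of calculus give
$$
\partial_x^m\bigl(u(x+a)-u(x)\bigr)=v(x+a)-v(x)=\int_x^{x+a}\partial_s^{m+1}u(s)\,\mathd s,
$$
so that, multiplying by $(-1)^m$ and inserting the representation from Proposition \ref{prop:vertical},
$$
(-1)^m\partial_x^m\bigl(u(x+a)-u(x)\bigr)=\frac{m!}{2}\int_x^{x+a}\int_0^{\infty}\frac{\mathd\mu(t)}{(t+s)^{m+1}}\,\mathd s.
$$

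Next I would interchange the two integrations. Since the integrand $(t+s)^{-m-1}$ is non-negative and $\mu$ is a positive measure, Tonelli's theorem applies with no further estimate; finiteness of the iterated integral is guaranteed by the fact, already established in Proposition \ref{prop:vertical}, that $\int_0^\infty (t+x)^{-m-1}\,\mathd\mu(t)<\infty$ for $x>0$. After the interchange, the substitution $s=x+w$ with $w\in(0,a)$ turns the inner integral into
$$
\int_x^{x+a}\frac{\mathd s}{(t+s)^{m+1}}=\int_0^{a}\frac{\mathd w}{(x+t+w)^{m+1}}.
$$

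It then remains to recognise the result as an integral against $\mu\ast m_a$. By definition of convolution of measures, for every non-negative Borel function $\psi$ one has $\int\psi\,\mathd(\mu\ast m_a)=\sum_k\int_0^a\psi(\lambda_k^2+w)\,\mathd w$; applying this with $\psi(t)=(x+t)^{-m-1}$ reproduces precisely the expression obtained above (summed against $\mu=\sum_k\epsilon_{\lambda_k^2}$). Hence
$$
(-1)^m\partial_x^m\bigl(u(x+a)-u(x)\bigr)=\frac{m!}{2}\int_0^{\infty}\frac{\mathd(\mu\ast m_a)(t)}{(x+t)^{m+1}}.
$$
Since $\mu\ast m_a$ is a positive measure, the right-hand side is a generalised Stieltjes function of order $m+1$ with vanishing constant term, which is the assertion.

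I expect the only genuine obstacle to be bookkeeping rather than analysis: because every quantity in sight is non-negative, Tonelli removes all convergence subtleties, so the substantive content is simply the algebraic identification of the inner $s$-integral with convolution against Lebesgue measure $m_a$ on $(0,a)$. One should, however, take a moment to verify that the constant $m!/2$ and the exponent $m+1$ survive the integration unchanged, and to confirm that $\mu\ast m_a$ is supported in $(0,\infty)$ so that the Stieltjes representation is of the stated form.
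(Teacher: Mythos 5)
Your proof is correct and takes essentially the same route as the paper: the paper likewise derives the corollary by integrating the representation of Proposition \ref{prop:vertical} over $[x,x+a]$ and identifying the resulting double integral with an integral against $\mu\ast m_a$, which is precisely your Tonelli-plus-substitution step. Your write-up simply makes explicit the interchange of integrals and the convolution bookkeeping that the paper's one-line displayed relation leaves implicit.
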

The proof of this corollary follows from the relation
	\begin{align*}
	\int_x^{x+a}\int_0^{\infty}\frac{\mathd\mu(s)}{(t+s)^{\lambda}}\, \mathd t=
	\int_0^{\infty}\int_0^{a}\frac{\mathd t}{(x+t+s)^{\lambda}}\mathd\mu(s)=\int_0^{\infty}\frac{\mathd(\mu\ast m_a)(t)}{(x+t)^{\lambda}}.
	\end{align*}
Let us finally mention a version of Theorem \ref{thm:main2} for vertical lines.
\begin{prop}
	\label{prop:vertical-w} Let $f$ be a real entire function of genus $p\geq 2$ and with only real 
	zeros $\{\lambda_k\}$, and let $m=\lfloor p/2\rfloor$. If $b\prec_wa$ and
	$$
	\sum_{k=1}^na_k=\sum_{k=1}^nb_k
	$$
	then
	\begin{align}
	(-1)^{m}\partial_x^{m-1} &\log \left|\frac{f(i\sqrt{x+a_1})\cdots f(i\sqrt{x+a_n})}{f(i\sqrt{x+b_1})\cdots f(i\sqrt{x+b_n})}\right| \label{eq:wg}
	\\
	&=\frac{1}{2}\int_0^\infty e^{-sx}s^{m}\xi(s)g(s)\mathd s,\nonumber
	\end{align}
	where $g$ is as in \eqref{eq:def-g} and $\xi(s)=\sum_{k=1}^{\infty}e^{-\lambda_k^2s}$. In particular the function in \eqref{eq:wg} is a generalised Stieltjes function of order $m+1$.
\end{prop}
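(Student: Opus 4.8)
The plan is to reduce the statement to a direct application of Theorem \ref{thm:main2}, transported through the substitution $z\mapsto i\sqrt{z}$ to an auxiliary function of genus $m$. Recall from the proof of Proposition \ref{prop:vertical} that for a real entire function $f$ of genus $p$ with only real zeros $\{\lambda_k\}$ one has $|f(i\sqrt{x})|^2=\kappa(x)$ for $x\geq 0$, where
$$
\kappa(z)=e^{r(z)}\prod_{k=1}^{\infty}\left(1+\frac{z}{\lambda_k^2}\right)e^{\sum_{j=1}^m(-z/\lambda_k^2)^j/j}
$$
belongs to the class $\mathcal E_m$ and has its zeros precisely at $\{-\lambda_k^2\}$. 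Since $p\geq 2$ we have $m=\lfloor p/2\rfloor\geq 1$, so Theorem \ref{thm:main2} is applicable to $\kappa$.

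First I would rewrite the left-hand side in terms of $\kappa$. Setting $u(x)=\log|f(i\sqrt{x})|=\tfrac12\log\kappa(x)$, the logarithm of the modulus of the product ratio becomes
$$
\sum_{k=1}^n\bigl(u(x+a_k)-u(x+b_k)\bigr)=\frac12\sum_{k=1}^n\bigl(\log\kappa(x+a_k)-\log\kappa(x+b_k)\bigr)=\frac12\log W_\kappa(x),
$$
with $W_\kappa$ the ratio \eqref{eq:defW} formed from $\kappa$ and the same non-negative sequences $a$ and $b$.

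Next I would apply Theorem \ref{thm:main2} to $\kappa\in\mathcal E_m$. The hypothesis $\sum_{k=1}^n a_k=\sum_{k=1}^n b_k$ is exactly the one required there, and the function $h$ attached to $\kappa$ through \eqref{eq:def-h} is $\sum_{k=1}^\infty e^{-\lambda_k^2 s}=\xi(s)$, since the zeros of $\kappa$ are $\{-\lambda_k^2\}$. Theorem \ref{thm:main2} then yields
$$
(-1)^m\partial_x^{m-1}\log W_\kappa(x)=\int_0^\infty e^{-sx}s^m\xi(s)g(s)\,\mathd s,
$$
and multiplying by $\tfrac12$ gives the asserted representation \eqref{eq:wg}, with the same $g$ from \eqref{eq:def-g}. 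Finally, since $b\prec_w a$, Theorem \ref{thm:main2} guarantees that $(-1)^m\partial_x^{m-1}\log W_\kappa(x)$ is a generalised Stieltjes function of order $m+1$; as this property is clearly preserved under multiplication by the positive constant $\tfrac12$, the function in \eqref{eq:wg} is a generalised Stieltjes function of order $m+1$.

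I do not expect a genuine obstacle, since the argument is a clean transfer of Theorem \ref{thm:main2}. The only points requiring care, and these are already secured in the proof of Proposition \ref{prop:vertical}, are that $\kappa$ genuinely lies in $\mathcal E_m$ — its zeros $-\lambda_k^2$ are real and non-positive, and $\kappa(x)=|f(i\sqrt{x})|^2>0$ for $x>0$ because the purely imaginary point $i\sqrt{x}$ is not a zero of $f$ — and that its genus is indeed $m$. Granting these facts, the remaining work is purely the bookkeeping carried out above.
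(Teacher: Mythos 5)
Your proof is correct and takes essentially the same route as the paper: both reduce the statement to Theorem \ref{thm:main2} applied to the function $\kappa\in\mathcal E_m$ from the proof of Proposition \ref{prop:vertical}, via the identity $\log\left|\prod_k f(i\sqrt{x+a_k})/\prod_k f(i\sqrt{x+b_k})\right|=\tfrac12\log W_\kappa(x)$. Your write-up simply spells out details the paper leaves implicit, such as the identification of $h$ with $\xi$ and the check that $m\geq 1$.
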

\begin{proof}
	We have, where $\kappa$ is as in the proof of Proposition \ref{prop:vertical},
	\begin{align}
	\log \left|\frac{f(i\sqrt{x+a_1})\cdots f(i\sqrt{x+a_n})}{f(i\sqrt{x+b_1})\cdots f(i\sqrt{x+b_n})}\right| 
	=\frac{1}{2}\log W_{\kappa}(x),
	\end{align}
	and the result follows from Theorem \ref{thm:main2}.
\end{proof}

Returning to the $G$-function of Barnes we notice the following corollary. 
\begin{cor}If $b\prec_wa$ and
	$
	\sum_{k=1}^na_k=\sum_{k=1}^nb_k
	$
	then
 \begin{align*}
	-&\log \left|\frac{G(i\sqrt{x+a_1})\cdots G(i\sqrt{x+a_n})}{G(i\sqrt{x+b_1})\cdots G(i\sqrt{x+b_n})}\right| 
	=\frac{1}{2}\int_0^\infty e^{-sx}s\xi(s)g(s)\mathd s,\nonumber
	\end{align*}
	 is a generalised Stieltjes function of order $2$. Here $g$ is as in \eqref{eq:def-g} and $\xi(s)=\sum_{k=0}^{\infty}(k+1)e^{-k^2s}$. 
\end{cor}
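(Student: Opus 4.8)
The plan is to obtain this corollary as a direct specialisation of Proposition \ref{prop:vertical-w} to the choice $f = G$. The hypotheses $b \prec_w a$ and $\sum_{k=1}^n a_k = \sum_{k=1}^n b_k$ are shared by the corollary and the proposition, so the only thing to check is that $G$ fits the remaining requirements of that proposition, namely that it is a real entire function of genus $p \geq 2$ with only real zeros. From the product representation of $G(z+1)$ recorded above, the exponential prefactor contains a $z^2$ term and the convergence factors are $e^{-z + z^2/(2k)}$; reading these off shows that $G(z+1)$ has genus $2$, and since translating the argument does not alter the genus, $G$ itself is entire of genus $p = 2$ with only real zeros. Hence the hypothesis $p \geq 2$ holds, with $m = \lfloor p/2 \rfloor = 1$.

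Next I would determine the zero set of $G$ together with multiplicities, so as to compute the function $\xi$. The stated location of the zeros of $G(z+1)$---at $z = -k$, $k \geq 1$, with multiplicity $k$---translates, after the shift $w = z+1$, into the statement that $G$ vanishes precisely at the non-positive integers $w = -j$, $j \geq 0$, with the zero at $-j$ of multiplicity $j+1$. Writing the zeros as $\{\lambda_k\}$ counted with multiplicity, each zero at $-j$ contributes $j+1$ terms equal to $e^{-(-j)^2 s} = e^{-j^2 s}$, so that
$$
\xi(s) = \sum_k e^{-\lambda_k^2 s} = \sum_{j=0}^{\infty}(j+1)e^{-j^2 s},
$$
which is exactly the function appearing in the statement.

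It then remains to insert $p = 2$ and $m = 1$ into the conclusion of Proposition \ref{prop:vertical-w}. Since $\partial_x^{m-1} = \partial_x^{0}$ is the identity and $(-1)^m = -1$, the left-hand side of \eqref{eq:wg} becomes $-\log|\,\cdots\,|$ and the factor $s^m$ becomes $s$, giving the asserted representation with $g$ as in \eqref{eq:def-g}, and Proposition \ref{prop:vertical-w} further yields that the expression is a generalised Stieltjes function of order $m+1 = 2$. The only point requiring care---and it is a verification rather than a genuine obstacle---is the correct bookkeeping of the zeros and their multiplicities under the passage from $G(z+1)$ to $G$, which is precisely what produces the weight $j+1$ in $\xi$.
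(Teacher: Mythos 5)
Your proposal is correct and is essentially the paper's intended argument: the corollary is stated immediately after Proposition \ref{prop:vertical-w} precisely as its specialisation to $f=G$, which has genus $p=2$ (so $m=1$) and real zeros at the non-positive integers $-j$ with multiplicity $j+1$, giving $\xi(s)=\sum_{j=0}^{\infty}(j+1)e^{-j^2s}$. Your bookkeeping of the shift from $G(z+1)$ to $G$ and of the multiplicities is exactly the verification needed, so there is nothing to add.
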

%
%

%
%

\section{The density $\rho_\ell$ and the Prouhet-Tarry-Escott\\ Problem}
\label{sec:escott}
Theorem \ref{thm:main3} and Lemma \ref{lemma_wsmcm2} unveil an interesting relation of having suitable measures for representing ratios of entire functions as generalised Stieltjes functions and finding finite sequences of numbers which have equal sums of powers up to some fixed degree. The problem of finding such finite sequences of integers is called the Prouhet-Tarry-Escott problem, see \cite{borwein}.

The connection is described in the following lemma. 

\begin{lemma}
\label{lemma:PTE-1}
	Let $a=(a_k)_{k=1}^n$, $b=(b_k)_{k=1}^n$ and $\rho_\ell$ be defined as in Lemma \ref{lemma_wsmcm2}. Then,
	$$
	\sum_{k=1}^na_k^j=\sum_{k=1}^nb_k^j
	$$
	for all $j$ with  $0\leq j\leq \ell$ if and only if $\rho_\ell(t)=0$ for all $t\geq \max\{a_n,b_n\}$.
	\begin{proof}
		For $t\geq \max\{a_n,b_n\}$, we have
		$$\rho_\ell(t)=\sum_{k=1}^n (t-a_k)^{\ell}-\sum_{k=1}^n (t-b_k)^\ell.$$
		The result follows by expanding the above and equating to zero.
	\end{proof}
\end{lemma}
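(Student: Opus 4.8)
The plan is to prove the equivalence by reducing it to a statement about polynomials in the variable $t$. First I would fix $t\geq \max\{a_n,b_n\}$. For such $t$, each indicator $\mathbbm{1}_{[a_k,\infty)}(t)$ and $\mathbbm{1}_{[b_k,\infty)}(t)$ equals $1$, since $t\geq a_n\geq a_k$ and $t\geq b_n\geq b_k$ for every $k$. Hence on this range the piecewise structure of $\rho_\ell$ disappears and we simply have
\begin{align*}
\rho_\ell(t)=\sum_{k=1}^n (t-a_k)^{\ell}-\sum_{k=1}^n (t-b_k)^\ell.
\end{align*}

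Next I would expand each power via the binomial theorem, writing $(t-a_k)^\ell=\sum_{j=0}^\ell \binom{\ell}{j}t^{\ell-j}(-a_k)^j$ and similarly for $b_k$. Collecting terms by powers of $t$ gives
\begin{align*}
\rho_\ell(t)=\sum_{j=0}^\ell \binom{\ell}{j}(-1)^j t^{\ell-j}\left(\sum_{k=1}^n a_k^j-\sum_{k=1}^n b_k^j\right),
\end{align*}
which exhibits $\rho_\ell$ (restricted to $t\geq\max\{a_n,b_n\}$) as a polynomial in $t$ whose coefficient of $t^{\ell-j}$ is $\binom{\ell}{j}(-1)^j(\sum_k a_k^j-\sum_k b_k^j)$.

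The forward implication is then immediate: if $\sum_k a_k^j=\sum_k b_k^j$ for all $0\leq j\leq \ell$, every coefficient vanishes and $\rho_\ell(t)=0$ on the whole range. For the converse I would argue that a polynomial vanishing on an interval (indeed on the unbounded set $[\max\{a_n,b_n\},\infty)$) must be identically the zero polynomial, so each coefficient vanishes; since the binomial factors $\binom{\ell}{j}(-1)^j$ are nonzero, this forces $\sum_k a_k^j=\sum_k b_k^j$ for every $j$ in the range $0\leq j\leq\ell$. I expect no genuine obstacle here; the only point requiring a word of care is the converse direction, where one must invoke that a nonzero polynomial has finitely many roots (so vanishing on a half-line forces all coefficients to be zero) rather than merely equating values at a single point.
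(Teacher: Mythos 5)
Your proposal is correct and follows exactly the paper's argument: observe that for $t\geq\max\{a_n,b_n\}$ all indicators equal $1$, so $\rho_\ell$ becomes a polynomial in $t$, then expand binomially and equate coefficients. The paper compresses the expansion step into one line (``The result follows by expanding the above and equating to zero''); your version merely spells out this expansion and the standard fact that a polynomial vanishing on a half-line has all coefficients zero, which is the intended justification.
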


The following result justifies why no sequences $a$ and $b$ can be found such that the ratio in Proposition  \ref{thm:main4} is a generalised Stieltjes function of order $p-\ell+2$ for all entire functions of genus $p$ and negative zeros.

\begin{prop}\label{lemma:impossible}
	Let $a = (a_k)_{k = 1}^n$ and $b
	= (b_k)_{k = 1}^n$ be finite sequences of non-negative numbers such that $b
	\prec_w a$, and suppose that
	\[ \sum_{k = 1}^n b_k = \sum_{k = 1}^n a_k \]
	and
	\[ \sum_{k = 1}^n b^2_k = \sum_{k = 1}^n a^2_k . \]
	Then, $a = b$. 
\end{prop}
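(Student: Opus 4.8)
The plan is to exploit the non-negativity of the density $\rho=\rho_1$ attached to the pair $(a,b)$ and to show that the two moment conditions force its total integral to vanish. Since $b\prec_w a$, Lemma \ref{lemma_wsmcm} gives $\rho(t)\ge 0$ for all $t$, and the conclusion $a=b$ is equivalent to $\rho\equiv 0$; so it suffices to prove $\int_0^\infty \rho(t)\,\mathd t=0$ and then invoke that a continuous, non-negative function of zero integral vanishes identically.

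First I would verify that the integral converges. By Lemma \ref{lemma:PTE-1} applied with $\ell=1$, the hypotheses $\sum_k a_k^0=\sum_k b_k^0$ (trivially true) and $\sum_k a_k=\sum_k b_k$ guarantee that $\rho(t)=0$ for every $t\ge \max\{a_n,b_n\}$. Hence $\rho$ is continuous and compactly supported, and $\int_0^\infty\rho$ reduces to a finite integral over $[0,\max\{a_n,b_n\}]$. Next I would compute it: for $T\ge \max\{a_n,b_n\}$, integrating the piecewise-linear expression \eqref{eq:def-rho} term by term and using $\int_0^T (t-a_k)\mathbbm{1}_{[a_k,\infty)}(t)\,\mathd t=\tfrac12(T-a_k)^2$ yields $\int_0^T\rho(t)\,\mathd t=\tfrac12\sum_{k=1}^n\big((T-a_k)^2-(T-b_k)^2\big)$. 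Expanding the squares, the $T^2$-terms cancel (equal number of points), the coefficient of $T$ is $-\big(\sum a_k-\sum b_k\big)=0$, and the constant term is $\tfrac12\big(\sum a_k^2-\sum b_k^2\big)=0$; this is exactly where both moment hypotheses enter. Thus $\int_0^\infty\rho=0$, and therefore $\rho\equiv 0$.

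Finally, from $\rho\equiv 0$ I would recover $a=b$: the right derivative of $t\mapsto \sum_k (t-a_k)\mathbbm{1}_{[a_k,\infty)}(t)$ at $t$ equals the counting function $\#\{k:a_k\le t\}$, so $\rho\equiv 0$ forces the counting functions of the two (sorted) sequences to agree for every $t\ge 0$, which is precisely $a_k=b_k$ for all $k$. The only delicate point is ensuring the eventual vanishing of $\rho$ so that the integral is well defined, and this is supplied by Lemma \ref{lemma:PTE-1}; once that is in hand, the rigidity coming from non-negativity together with a vanishing integral does all the work. An alternative, purely algebraic route proceeds by summation by parts: writing $c_m=\sum_{k\le m}(b_k-a_k)\ge 0$ with $c_0=c_n=0$, one finds $\sum_k (b_k^2-a_k^2)=-\sum_{k=1}^{n-1} c_k\big((a_{k+1}+b_{k+1})-(a_k+b_k)\big)\le 0$, and equality forces each product to vanish; extracting $a=b$ from this, however, requires an extra downward induction, so the integral approach is the cleaner of the two.
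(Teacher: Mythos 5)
Your proof is correct and is in substance the paper's own argument: the paper observes that $\rho_2'=2\rho_1\geq 0$ by Lemma \ref{lemma_wsmcm} and that $\rho_2$ vanishes for $t\leq a_1$ and for $t\geq\max\{a_n,b_n\}$ by Lemma \ref{lemma:PTE-1}, so that $\rho_2\equiv 0$ and hence $a=b$. Since $\rho_2(T)=2\int_0^T\rho_1(t)\,\mathd t$, your version---a non-negative $\rho_1$ whose total integral vanishes, the integral being computed by exactly the expansion underlying Lemma \ref{lemma:PTE-1} for $\ell=2$---is the same argument read through the fundamental theorem of calculus, phrased on $\rho_1$ rather than on its primitive $\rho_2$.
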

\begin{proof}
    The function
	\[ \rho_2 (t) = \sum_{k = 1}^n \mathbbm{1}_{[a_k, \infty)} (t) (t - a_k)^2 -
	\sum_{k = 1}^n \mathbbm{1}_{[b_k, \infty)} (t) (t - b_k)^2  \]
	is $C^1$ (since each summand is of class $C^1$).
Its derivative
	$$\rho_2' (t) = 2 \left( \sum_{k = 1}^n \mathbbm{1}_{[a_k, \infty)} (t) (t -
	a_k) - \sum_{k = 1}^n \mathbbm{1}_{[b_k, \infty)} (t) (t - b_k) \right)
	$$
	is non-negative because $b \prec_w a$, see Lemma \ref{lemma_wsmcm}. Hence $\rho_2       $
	is non-decreasing. But $\rho_2 (t) = 0$ for $t \leqslant a_1$, and also, by Lemma \ref{lemma:PTE-1}, for $t
	\geqslant \max (a_n, b_n)$. Hence $\rho_2 \equiv 0$, and thus $a = b$.
\end{proof}


Let us end this paper by giving 
a sufficient condition for non-negativity of the function $\rho_\ell$ in Theorem \ref{thm:main3}.

\begin{prop}
\label{prop:sufficient}
	Let $a = (a_k)_{k = 1}^n$ and $b= (b_k)_{k = 1}^n$ such that $a_1<b_1$ and
	$$
	\sum_{k=1}^na_k^j=\sum_{k=1}^nb_k^j
	$$
	for all $j$ with  $0\leq j\leq n-1$. Then $\rho_{n-1}\geq 0$.
\end{prop}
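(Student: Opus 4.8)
The plan is to reduce the non-negativity of $\rho_{n-1}$ to a sign-change count for the step function $\rho_0$, and then to propagate this bound upward by repeated integration. Write $\sigma=\sum_{k=1}^n\epsilon_{a_k}-\sum_{k=1}^n\epsilon_{b_k}$, so that $\rho_\ell(t)=\int(t-s)^\ell\mathbbm{1}_{[s,\infty)}(t)\,\mathd\sigma(s)$ and $\rho_\ell'(t)=\ell\,\rho_{\ell-1}(t)$ for $\ell\ge 1$, whence $\rho_\ell(t)=\ell\int_{-\infty}^t\rho_{\ell-1}(s)\,\mathd s$. Since the power sums of $a$ and $b$ agree up to order $n-1$, Lemma \ref{lemma:PTE-1} shows that every $\rho_\ell$ with $0\le\ell\le n-1$ vanishes for $t\ge c_n:=\max\{a_n,b_n\}$; they also vanish for $t\le a_1$. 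Thus each such $\rho_\ell$ is compactly supported, and for $\ell\ge 1$ it is continuous and vanishes at both ends of $[a_1,c_n]$. For a function $f$ I write $Z(f)$ for its number of sign changes.

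First I would bound $Z(\rho_0)$. The step function $\rho_0=\sum_k\mathbbm{1}_{[a_k,\infty)}-\sum_k\mathbbm{1}_{[b_k,\infty)}$ starts and ends at $0$ (equal cardinalities), and its total variation, being the sum over break-points of the absolute net jump, is at most the total number $2n$ of points. On the other hand, if $\rho_0$ has $k$ sign changes, then it partitions into $k+1$ alternating constant-sign blocks; realizing these forces total variation at least $2k+2$, namely an initial ascent of size $\ge 1$, then $k$ sign-reversing transitions each crossing from a value of modulus $\ge 1$ to one of opposite sign (variation $\ge 2$ each, even through flat zero stretches), and a final descent of size $\ge 1$. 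Comparing the two estimates gives $2k+2\le 2n$, that is, $Z(\rho_0)\le n-1$.

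Next I would push this bound up the ladder. Integrating a compactly supported, not identically zero function $g$ strictly lowers its number of sign changes: its primitive $G=\int_{-\infty}^{\cdot}g$ must be produced by a $g$ that is positive where $G$ first rises from $0$ and negative where $G$ finally returns to $0$, and Rolle's theorem applied between consecutive sign changes of $G$ yields $Z(G)\le Z(g)-1$. Applying this to $\rho_\ell=\ell\int_{-\infty}^{\cdot}\rho_{\ell-1}$ for $\ell=1,\ldots,n-1$, each step being legitimate because $\rho_\ell$ is compactly supported, I obtain $Z(\rho_{n-1})\le Z(\rho_0)-(n-1)\le 0$. Hence $\rho_{n-1}$ does not change sign.

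Finally I would fix the sign. Because $a_1<b_1$, for $t$ slightly larger than $a_1$ the only active terms in $\rho_{n-1}$ are those coming from the $a_k$ equal to $a_1$, so $\rho_{n-1}(t)=m(t-a_1)^{n-1}>0$ there, where $m\ge 1$ is the multiplicity of $a_1$; this is precisely where the hypothesis $a_1<b_1$ is used, forcing the leading sign to be $+$. A function that never changes sign and is positive somewhere is non-negative everywhere, so $\rho_{n-1}\ge 0$. I expect the main obstacle to be the careful bookkeeping in the two sign-change estimates, that is, making the bound $Z(\rho_0)\le n-1$ rigorous in the presence of coincident points (net jumps of size $>1$) and of flat zero stretches, and making the variation-diminishing step precise for continuous, compactly supported, piecewise-polynomial functions; both are dispatched by the total-variation estimate and by Rolle's theorem, respectively. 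As a consistency check one may note that integration by parts turns the power-sum hypotheses into the orthogonality of $\rho_0$ to all polynomials of degree $\le n-2$, which forces $Z(\rho_0)\ge n-1$ and shows the count $Z(\rho_0)=n-1$ is in fact sharp.
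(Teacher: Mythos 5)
Your proof is correct and follows essentially the same route as the paper: both establish that the bottom-level step function has at most $n-1$ sign changes (your total-variation count of $\rho_0$ is the same bookkeeping as the paper's count of slope reversals of $\rho_1$, since $\rho_1'=\rho_0$), then use $\rho_\ell'=\ell\rho_{\ell-1}$ together with the vanishing of each $\rho_\ell$ at both ends of the support interval to lose one sign change per integration, and finally use $a_1<b_1$ to fix the remaining sign. Your version merely makes explicit the Rolle-type step that the paper leaves implicit in ``repeated applications of this relation.''
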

\begin{proof}
	In the interval $I=(a_1,\max\{a_n,b_n\})$, the function $\rho_1$ can have at most $n-1$ changes of monotonicity. To see this, we put all the numbers $a_k$ and $b_k$ in increasing order and notice that the  continuous and piecewise linear function $\rho_1$ starts increasing after $a_1$. Each appearance of an $a_k$ makes its slope increase by $1$ and each appearance of a $b_k$ decreases the slope by $1$; since a slope of $0$ does not change the monotonicity we are left with at most $n-1$ changes of monotonicity.
	
	It is easy to show that $\rho_k' = k \rho_{k-1}$ for $k\geq 2$ and repeated applications of this relation yield that the function $\rho_{n-1}$ can have at most $1$ change of monotonicity in $I$. Since $\rho_{n-1}$ takes the value $0$ at both ends of $I$, it cannot change sign inside, and as $a_1<b_1$, we have that $\rho_{n-1}\geq 0$.
\end{proof}
In relation to the Prouhet-Tarry-Escott problem, pairs of integer sequences that satisfy the condition of the lemma above are called ideal solutions. We briefly note that the so called ideal symmetric solutions of length $n$ correspond to symmetries on the $\rho_1$ function, and in particular, if $n$ is odd (i.e. we have an odd ideal symmetric solution), then the graph of $\rho_1$ is symmetric with respect to the middle point $((a_1+b_n)/2,0)$, and if $n$ is even, then it is symmetric with respect to the middle line $x=(a_1+a_n)/2$.
%
%
%

\noindent
Dimitris Askitis\\
Department of Psychology\\
University of Copenhagen\\
Øster Farimagsgade 2A\\
DK-1353 Copenhagen K\\
{\em email}:\hspace{2mm}{\tt dimitrios.askitis@psy.ku.dk}
\bigskip

\noindent
Henrik Laurberg Pedersen\\
Department of Mathematical Sciences\\
University of Copenhagen \\
Universitetsparken 5\\
DK-2100, Denmark\\
{\em email}:\hspace{2mm}{\tt henrikp@math.ku.dk}

\end{document}